\documentclass[oneside,a4paper,10pt]{amsart}

\usepackage[a4paper,left=22mm,right=42mm]{geometry} 
\usepackage{amsmath}
\usepackage{amsthm}
\usepackage{amscd}
\usepackage{euscript}
\usepackage{amssymb}
\usepackage{graphicx}
\usepackage{overpic}
\usepackage{sidecap}
\usepackage{enumitem}
\setlist[enumerate,1]{label = (\roman*),ref = (\roman*)}
\usepackage{contour}
\contourlength{1pt}
\usepackage[markup=underlined,todonotes={textsize=footnotesize}]{changes}
\definechangesauthor[color=red]{CM}
\definechangesauthor[color=blue]{AP}
\definechangesauthor[color=green]{OK}

\makeatletter
\def\paragraph{\@startsection{paragraph}{4}%
  \z@\z@{-\fontdimen2\font}%
  {\normalfont\bfseries}}
\makeatother

\newcommand{\mmeq}[2]{\noindent\hspace{\parindent}\parbox[b]{1em}{\hfill$ #1$}\
$\displaystyle#2$}

\def\dash{\discretionary{-}{}{-}\penalty1000\hskip0pt}

\theoremstyle{plain}
\newtheorem{theorem}{Theorem}[section]
\newtheorem{lemma}[theorem]{Lemma}
\newtheorem{proposition}[theorem]{Proposition}

\theoremstyle{remark}
\newtheorem{remark}[theorem]{Remark}
\newtheorem{example}[theorem]{Example}

\theoremstyle{definition}
\newtheorem{definition}[theorem]{Definition}

\newcommand{\R}{\mathbb{R}}
\newcommand{\Z}{\mathbb{Z}}
\newcommand{\DD}{\Delta}
\newcommand{\dd}{{\rm d}}

\captionindent0pt
\makeatletter\def\@captionfont{\normalfont\footnotesize}\makeatother

\title{Stressability of Semi-Discrete Frameworks}

\author{Oleg Karpenkov, Christian M\"uller, Anna Pratoussevitch}

\begin{document}

\begin{abstract}
  In this paper we study the stressability of semi-discrete frameworks in the
  plane which are generated by a discrete sequence of smooth curves. We
  characterize their stressability property by the existence of stresses
  fulfilling certain difference-differential equation. In particular, we define
  a semi-discrete height function which we use to generate liftings. Furthermore,
  we show a semi-discrete analogue of the Maxwell-Cremona lifting property which
  implies that the stressable semi-discrete frameworks in the plane are
  precisely the orthogonal projections of semi-discrete conjugate surfaces in
  3-space. Finally, we discuss geometric implications for frameworks with
  vanishing boundary forces and characterize the liftability of frameworks which
  only consist of two neighboring curves forming one strip.
\end{abstract}

\maketitle
\tableofcontents

\setlength{\parskip}{2mm}

\section{Introduction}

Our paper is about characterizing stressability of semi-discrete
frameworks. We will show an analogous result to the classical Maxtwell-Cremona
lifting property: A planar semi\dash discrete framework admits a non-trivial force
load in static equilibrium if and only if it is the projection of a semi\dash
discrete surface with developable strips.

A planar bar-joint framework is stressable if non-vanishing forces
act on the bars by pushing or pulling while the framework remains in
equilibrium. This property is related to polyhedra in 3-space. 
J.C.~Maxwell and L.~Cremona have found the remarkable characterization
that a realization of a planar graph as a bar-joint framework in $\R^2$ is
stressable if and only if the bars coincide with the projection of
the edges of a polyhedron in
$\R^3$~\cite{maxwell1864xlv,maxwell1870reciprocal,cremona1872figure}.
The corresponding polyhedron is therefore called a \emph{lifting of the
framework}. 
The idea of lifting was used to visualize the statics of a framework
(graphic statics) and is still applied in modern structural engineering
for architecture~\cite{Baker-McRobie-2025,mbmm}.
This property has been utilized in several application of which we would only mention one where it was used to solve the carpenter's rule conjecture~\cite{Con03}.
For general references about liftings in discrete and computational geometry, we refer the reader to~\cite[Chapter 61]{handbookDCG}, as well as~\cite{WAC} and~\cite{sitharam2018handbook}. 

Various generalizations of the lifting property have been developed over the years. 
C.~Borcea and I.~Streinu proved an analogue of the theorem of Maxwell and
Cremona for planar periodic frameworks \cite{borcea2014liftings,BorStr}.
A notion of stresses for higher dimensional frameworks -- in that case cell-complexes -- has been investigated by K.~Rybnikov~\cite{rybnikov1999stresses}.
Trivalent multidimensional
frameworks have been characterized in~\cite{karpenkov+2022} in different ways.
Geometric conditions for the realization of equilibrium states of
frameworks with higher co-dimension, like bar-joint frameworks in $\R^3$,
have been investigated in~\cite{karpenkov+2021} while their liftings have been studied with
topological methods
in~\cite{karpenkov2023differentialapproachmaxwellcremonaliftings}.
The liftings of non-planar frameworks in $\R^2$ to polyhedral surfaces have recently been investigated in~\cite{karpenkov2025liftingssurfacesplane}.

In this paper, we generalize the classical Maxwell-Cremona lifting to the case of so-called semi-discrete surfaces or semi-discrete frameworks
(see Definition~\ref{def:lifting}).
Semi-discrete surfaces have been employed in geometry processing~\cite{pottmann+2008},
studied in the theory of transformations~\cite{mueller+2013},
investigated within finite and infinite flexibility~\cite{karpenkov-2015}
and brought together with curvature notions~\cite{karpenkovWallner-2014,mueller-2015}.
Semi-discrete surfaces can be seen as a discrete sequence of smooth parametrized curves such that there is a correspondence between neighboring curves.
We can think of this correspondence as very thin threads connecting corresponding points on neighboring curves.
Two neighboring curves are considered a strip.
The semi-discrete analogue of a polyhedral surface consists of developable strips.

In our setting, instead of forces acting on edges (as in classical discrete frameworks) we consider forces acting tangentially to the smooth curves, as well as infinitesimal forces acting along the connections between corresponding points on neighboring curves.
In this model, the local equilibrium condition becomes a difference-differential equation (Equation~(\ref{eq:sdequilibrium})).
The main contribution of our paper, beyond establishing a sensible framework for stressable semi-discrete structures, is a geometric characterization: we prove that a planar semi-discrete framework admits a non-trivial force load in static equilibrium if and only if it is the projection of a semi-discrete surface composed of developable strips (see Theorem~\ref{thm:liftability}).

\paragraph{Organization of the paper}

In Section~\ref{sec:sds} we give the definition of semi-discrete surfaces
and frameworks and derive a sensible notion for a self-stress and a
semi-discrete equilibrium condition. 
In Section~\ref{sec:mcl} we define a height function -- a lifting function
-- over planar semi-discrete surfaces.
We will show that a semi-discrete framework together with its height function constitutes a semi-discrete
surface with developable strips if and only if the framework is stressable.
Finally, in Section~\ref{sec:stressability} we look for the conditions on a framework that must be satisfied to ensure the existence of a self-stress.

\begin{SCfigure}[2][t]
  \begin{overpic}[width=.5\textwidth]{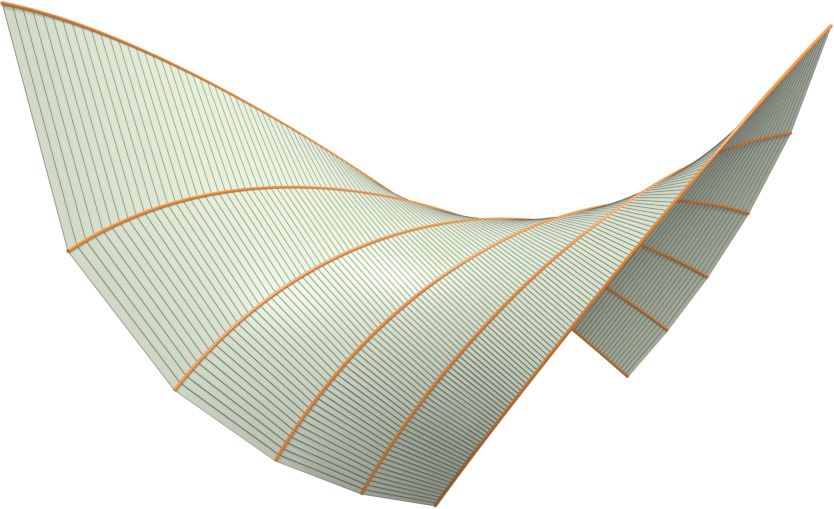}
    \put(0,28){$F_{i - 1}$}
    \put(17,11){$F_i$}
    \put(27,2){$F_{i + 1}$}
  \end{overpic}
  \caption{A semi-discrete surface consists of a discrete sequence of
  smooth curves $F_i$. The ``actual'' surface between two neighboring
  curves is in our case modeled by line segments
  between the corresponding points~$F_i(t)$ and~$F_{i+1}(t)$ for all~$t$.
  Hence the strips
  between two neighboring curves are ruled surfaces.
  The figure shows a semi-discrete surface made up of five strips. In this case, the strips are non-developable ruled surfaces, though this is not immediately apparent from the image.}
  \label{fig:general-sd-surface}
\end{SCfigure}

\section{Semi-Discrete Surfaces and Self-Stresses}
\label{sec:sds}

In this section we first recall the definition of semi-discrete surfaces and then derive a sensible notion for a semi-discrete framework to be stressable.
To do this we will follow the usual method to translate a physically ideal statical system to ODEs describing an equilibrium state.

\subsection{Semi-discrete surfaces}

Semi-discrete surfaces have a discrete and a smooth direction.
They can be interpreted as a sequence of smooth curves. 
The parameter domain is therefore a subset of $\Z \times \R$.
In what follows we will always assume the parameter domain to be a ``rectangle'' of the form 
\begin{equation*}
  U = \{0, \ldots, n\} \times [0, T]
\end{equation*}
with $n \in \Z_{\geq 0}$ and $T > 0$.

\begin{definition}
  A \emph{semi-discrete surface} is a mapping 
  \begin{align*}
    F : U &\to \R^d,\\
    (i, t) &\mapsto F(i, t).
  \end{align*}
  We often denote the smooth curves of the semi-discrete surface simply by $F_i(t) = F(i, t)$.
  For the \emph{semi-discrete partial derivatives} we use the common notation for the \emph{smooth derivative} of the curves and the \emph{discrete forward difference} of the polygons
  \begin{equation*}
    \partial_t F_i = \frac{\partial F_i}{\partial t} = \dot F_i 
    \quad\text{and}\quad 
    \DD F_i = F_{i + 1} - F_i.
  \end{equation*}
  A semi-discrete surface is called \emph{regular} if the discrete and
  smooth derivatives are linearly independent, i.e., the pairs
  \begin{equation}
    \label{eq:regular}
    (\dot F_i, \DD F_i)
    \qquad\text{and}\qquad
    (\dot F_i, \DD F_{i - 1})
  \end{equation}
  are linearly independent for all $i$ and $t$.
\end{definition}

We will only consider regular semi-discrete surfaces in this paper.

To align with the classical notion of a smooth surface we may think of a semi-discrete surface as being linearly interpolated between the curves (see Figure~\ref{fig:general-sd-surface}) with a parametrization defined piecewise in the form 
\begin{equation}
  \label{eq:strips}
    (u, v) \mapsto (1 + i - u) F_i(v) + (u - i) F_{i + 1}(v)
  \quad\text{when}\quad 
  u \in [i, i + 1),\ v \in [0, T].
\end{equation}
For each $i$, Equation~\eqref{eq:strips} describes a ruled surface. This
ruled surface can be developable or non-developable. This leads us to the
following subclass of so-called conjugate semi-discrete surfaces which are
particularly important for this paper.
\begin{definition}
  \label{def:conjugate}
  A semi-discrete surface is called \emph{conjugate} if for all $i$ the ruled surface in Equation~\eqref{eq:strips} is a developable ruled surface.
\end{definition}

\begin{remark}
  \label{rem:conjugate}
  A semi-discrete surface is conjugate if and only if 
  \begin{equation*}
    \DD F_i(t), \dot F_i(t), \dot F_{i + 1}(t)
  \end{equation*}
  are linearly dependent for all $i$ and $t$. This holds because
  developable surfaces are ruled surfaces which have a unique tangent
  plane along the regular points of each ruling.
\end{remark}

\begin{figure}[h]
  \hfill
  \begin{overpic}[width=.3\textwidth]{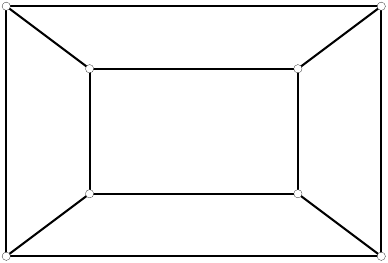}
  \end{overpic}
  \hfill
  \begin{overpic}[width=.3\textwidth]{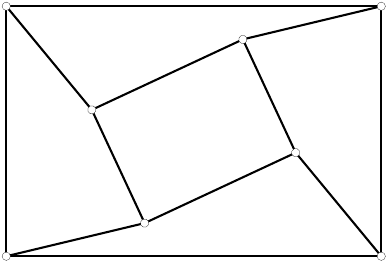}
  \end{overpic}
  \hfill{}
  \caption{Two discrete bar-joint frameworks. Whether a discrete framework
  is stressable or not can depend on the geometric realization. 
  The framework on the \emph{left} is stressable whereas the framework on
  the \emph{right} is \emph{not} stressable (see,
  e.g.,~\cite{karpenkov+2021}).}
  \label{fig:stressable-framewok}
\end{figure}

\subsection{Discrete frameworks}
\label{subsec:discrete-fw}

Before we consider stresses related to semi-discrete frameworks let us
quickly review the corresponding theory on finite graphs. Let $G = (V, E)$
be a finite graph with $V \subset \R^d$ as the vertex set and $E \subset V
\times V$ as the edge set.
We do not allow loops nor multiple edges. 
A realization of such a graph in $\R^d$ where the edges are realized as line segments is called a \emph{framework}.
For a detailed introduction to
frameworks see, e.g.,~\cite{connelly-guest-2022}.

A \emph{stress} is a map $\omega : E \to \R$. For each edge $p_i p_j \in
E$ the product $\omega_{ij} (p_j - p_i)$ is the force that acts on that
edge. The framework is in \emph{(static) equilibrium} if around each
vertex $p_i$ all forces add up to zero
\begin{equation} 
  \label{eq:discr-equilibrium}
  \sum_{\{j\mid p_ip_j\in E\}} \omega_{ij} (p_j - p_i) = 0.
\end{equation}
In that case the stress is called \emph{self-stress} and the framework
\emph{stressable}. Note that not all frameworks are stressable. See
Figure~\ref{fig:stressable-framewok} for examples of a stressable and a non-stressable
framework.

\subsection{Stresses for semi-discrete surfaces in the plane}

Since we investigate the stressability or equilibrium conditions of planar
semi-discrete surfaces ($d = 2$) we will call them simply frameworks. In
order to better distinguish frameworks and semi-discrete surfaces in
higher dimensions we will use the lower case letter $f$ for the
frameworks.

\begin{definition}
  \label{def:sdframework}
  A planar semi-discrete surface 
  \begin{align*}
    f : \{-1, 0, \ldots, n + 1\} \times [0, T] \to \R^2
  \end{align*}
  is called a \emph{$($semi-discrete$)$ framework}.
\end{definition}

Note that the indices of a semi-discrete framework start at $-1$ and end at $n + 1$.
We artificially add a curve at the beginning and at the end to be able to deal with boundary forces more easily.
We will see below two
types of forces acting at points of a semi-discrete framework.
One type acts along ``smooth'' derivative vectors $\dot f_i$ whereas the other one acts along ``discrete'' derivative vectors $\DD f_i$. Consequently, there will
be forces at the boundary curve~$f_0$ pointing in the direction~$f_{-1}-f_0$,
hence the advantage of introducing an extra curve~$f_{-1}$ and,
analogously, an extra curve~$f_{n + 1}$.

In analogy to discrete frameworks (see Section~\ref{subsec:discrete-fw}) we define stresses as two real-valued functions; one for the smooth curves and one for the discrete edges.

\begin{SCfigure}[2][tb]
  \begin{overpic}[width=.49\textwidth]{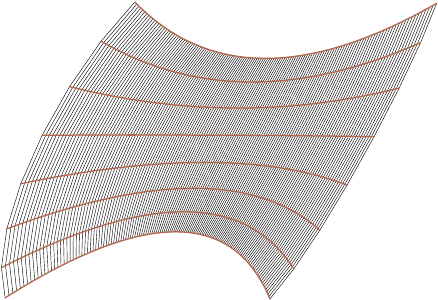}
    \put(85,35){\small$f_{i + 1}$}
    \put(80,24){\small$f_i$}
    \put(73,13){\small$f_{i - 1}$}
  \end{overpic}
  \caption{A semi-discrete framework is a planar semi\dash discrete surface
  $f : U \to \R^2$. It is therefore a semi\dash discrete parametrization of a
  part of the plane.}
  \label{fig:sd-framewok}
\end{SCfigure}

\begin{definition}
  Let $f$ be a semi-discrete framework (see Figure~\ref{fig:sd-framewok}).
  The pair $(\lambda, \mu)$ with 
  \begin{equation*}
    \lambda : \{0, \ldots, n\} \times [0, T] \to \R
    \quad\text{and}\quad
    \mu : \{-1, 0, \ldots, n\} \times [0, T] \to \R
  \end{equation*}
  is called a \emph{stress} on $f$;
  The function $\lambda_i$ is the stress for the curve~$f_i$ and $\mu_i$ the stress for the edges~$f_i f_{i + 1}$.
\end{definition}

\begin{figure}[b]
  \begin{overpic}[width=.40\textwidth]{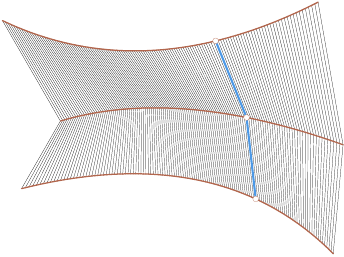}
    \put(39,62){\footnotesize$f_{i + 1}$}
    \put(56,64){\rotatebox{18}{\footnotesize$f_{i + 1}(t)$}}
    \put(47,37){\contour{white}{\footnotesize$f_i$}}
    \put(72,43){\rotatebox{-16}{\contour{white}{\footnotesize$f_i(t)$}}}
    \put(44,18){\footnotesize$f_{i - 1}$}
    \put(63,14){\rotatebox{-18}{\footnotesize$f_{i - 1}(t)$}}
    \put(64,60){\rotatebox{-64}{\contour{white}{\footnotesize$\DD f_i(t)$}}}
    \put(72,38){\rotatebox{-80}{\contour{white}{\footnotesize$\DD f_{i - 1}(t)$}}}
  \end{overpic}
  \hfill
  \begin{overpic}[width=.55\textwidth]{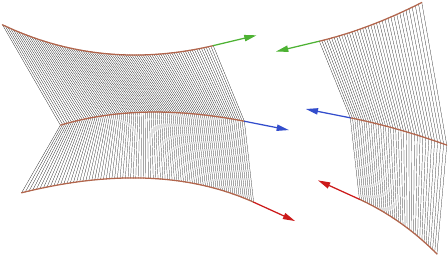}
    \put(54,31.3){\rotatebox{-10}{\small$V_i(t)$}}
    \put(66.5,28.3){\rotatebox{-10}{\small$-V_i(t)$}}
    \put(54,8){\rotatebox{-24}{\small$V_{i - 1}(t)$}}
    \put(68,13){\rotatebox{-24}{\small$-V_{i - 1}(t)$}}
    \put(45,48){\rotatebox{16}{\small$V_{i + 1}(t)$}}
    \put(60,47){\rotatebox{16}{\small$-V_{i + 1}(t)$}}
    \put(49,45){\rotatebox{-67}{\small$\mu_i \DD f_i$}}
    \put(55,29){\rotatebox{-83}{\tiny$\mu_{i - 1} \DD f_{i - 1}$}}
  \end{overpic}
  \caption{If we cut off a piece of a self-stressed semi-discrete framework~$f$ at parameter~$t$, we must replace the removed surface part by compensatory forces. For each curve~$f_i$, we need a force vector function $V_i=\lambda_i\dot f_i$.
  The magnitude of the force depends on the parameter~$t$.}
  \label{fig:forces}
\end{figure}

In a semi-discrete framework there are forces pulling or pushing along the
smooth curves $f_i$ and along the rulings. 
Suppose we cut a smooth curve $f_i$ at parameter $t$ and replace the force
generated by the removed curve $f_i(s)$ with $s > t$ by a force vector
$V_i(t)$. The force vector that holds back the removed curve is negative
to it: $-V_i(t)$ (see Figure~\ref{fig:forces}).
On the other hand the force vector is defined to be the stress times the tangent vector
\begin{equation}
  \label{eq:forcevector}
  V_i(t) = \lambda_i(t) \dot f_i(t).
\end{equation}
The force generated by each ruling is infinitesimally small and denoted by
\begin{equation*}
  \mu_i \DD f_i \, \dd t
  \quad\text{and}\quad 
  -\mu_{i - 1} \DD f_{i - 1} \, \dd t.
\end{equation*}
In analogy to the discrete equilibrium condition~\eqref{eq:discr-equilibrium}, we require that the sum of all forces around each point vanishes. Consequently, at a small segment $t, t + \DD t$ along the curve~$f_i$, the equilibrium condition is 
\begin{equation*}
  V_i(t + \DD t) - V_i(t) 
  + \int_t^{t + \DD t} 
  \big(\mu_i(s) \DD f_i(s) - \mu_{i - 1}(s) \DD f_{i - 1}(s)\big)\, \dd s
  = 0,
\end{equation*}
which implies in the limit
\begin{equation*}
  \dot V_i + \mu_i \DD f_i - \mu_{i - 1} \DD f_{i - 1} = 0.
\end{equation*}
Together with Equation~\eqref{eq:forcevector} we obtain
\begin{equation*}
  \dot \lambda_i \dot f_i + \lambda_i \ddot f_i + \mu_i \DD f_i 
  - \mu_{i - 1} \DD f_{i - 1} = 0.
\end{equation*}
We take this equation, derived with physical arguments, as the basis for our definition of stressable frameworks. 

\begin{definition}
  A stress $(\lambda, \mu)$ on a semi-discrete framework $f$ is a
  \emph{self-stress} if locally the difference\dash differential equation
  \begin{equation}
    \label{eq:sdequilibrium}
    \dot \lambda_i \dot f_i + \lambda_i \ddot f_i + \mu_i \DD f_i 
    - \mu_{i - 1} \DD f_{i - 1} = 0
  \end{equation}
  is satisfied for each $(i, t) \in U = \{0, \ldots, n\} \times [0, T]$.
  If a self-stress exists we call $f$ \emph{stressable}.
\end{definition}

\begin{proposition}
  For a stress $(\lambda, \mu)$ as above, 
  the \emph{complete force load}
  \begin{equation*}
    S_i(a, b) 
    = 
    \lambda_i(b) \dot f_i(b) - \lambda_i(a) \dot f_i(a) 
    + 
    \int_a^b (\mu_i(t) \DD f_i(t) - \mu_{i - 1}(t) \DD f_{i - 1}(t))\, \dd t
  \end{equation*}
  along $f_i$ between $a$ and $b$ vanishes for all $i$ and for all $a, b
  \in [0, T]$ if and only if $(\lambda, \mu)$ is a self-stress for $f$.
\end{proposition}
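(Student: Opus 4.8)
The plan is to recognize that the complete force load $S_i(a,b)$ is nothing but the definite integral of the left-hand side of the equilibrium equation~\eqref{eq:sdequilibrium}, after which both implications follow from elementary calculus. The single observation that makes everything work is the product rule
\[
  \frac{\dd}{\dd t}\bigl(\lambda_i \dot f_i\bigr) = \dot\lambda_i \dot f_i + \lambda_i \ddot f_i,
\]
which lets me rewrite the first two terms of~\eqref{eq:sdequilibrium} as the derivative of the force vector $V_i = \lambda_i \dot f_i$ from~\eqref{eq:forcevector}.

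First I would apply the fundamental theorem of calculus to the boundary contribution of $S_i(a,b)$, writing
\[
  \lambda_i(b)\dot f_i(b) - \lambda_i(a)\dot f_i(a) = \int_a^b \frac{\dd}{\dd t}\bigl(\lambda_i \dot f_i\bigr)\, \dd t = \int_a^b \bigl(\dot\lambda_i \dot f_i + \lambda_i \ddot f_i\bigr)\, \dd t.
\]
Substituting this into the definition of $S_i(a,b)$ and merging the two integrals gives the key identity
\[
  S_i(a,b) = \int_a^b \bigl(\dot\lambda_i \dot f_i + \lambda_i \ddot f_i + \mu_i \DD f_i - \mu_{i-1} \DD f_{i-1}\bigr)\, \dd t,
\]
so that $S_i(a,b)$ is exactly the integral of the equilibrium expression over $[a,b]$.

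From here both directions are immediate. If $(\lambda,\mu)$ is a self-stress, the integrand vanishes identically by~\eqref{eq:sdequilibrium}, hence $S_i(a,b)=0$ for all $i$ and all $a,b$. Conversely, suppose $S_i(a,b)=0$ for all $a,b\in[0,T]$. Fixing $a$ and differentiating the key identity with respect to $b$ — or equivalently invoking the standard fact that a continuous function whose integral over every subinterval vanishes must itself be zero — yields that the integrand vanishes at every $t$, which is precisely equation~\eqref{eq:sdequilibrium}. Thus $(\lambda,\mu)$ is a self-stress.

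The argument is essentially routine; the only point requiring care is regularity. To apply the fundamental theorem of calculus and to pass from the vanishing of all integrals to the vanishing of the integrand, one needs $\lambda_i \dot f_i$ to be of class $C^1$ and the integrand to be continuous in $t$. This is guaranteed by the smoothness of the curves $f_i$ together with the tacitly assumed continuity of $\lambda$ and $\mu$ and differentiability of $\lambda$, so no genuine obstacle arises.
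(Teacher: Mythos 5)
Your proof is correct and follows essentially the same route as the paper: both arguments rest on the identity $\partial S_i(a,b)/\partial b = \dot\lambda_i\dot f_i + \lambda_i\ddot f_i + \mu_i\DD f_i - \mu_{i-1}\DD f_{i-1}$ (your ``key identity'' is just the integrated form of this, using $S_i(a,a)=0$), with the forward direction by vanishing of the integrand and the converse by differentiating in $b$. The only cosmetic difference is that the paper differentiates $S_i$ directly rather than first rewriting it as a single integral.
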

\begin{proof}
  Note that the derivative
  \begin{equation}
    \label{eq:sdb}
    \frac{\partial S_i(a, b)}{\partial b} 
    =
    \dot \lambda_i(b) \dot f_i(b) + \lambda_i(b) \ddot f_i(b) 
    +
    \mu_i(b) \DD f_i(b) - \mu_{i - 1}(b) \DD f_{i - 1}(b)
  \end{equation}
  is the left hand side of Equation~\eqref{eq:sdequilibrium}.
  If $S_i(a, b)$ vanishes for all $a, b$ then so does its
  derivative for all $b$, hence Equation~\eqref{eq:sdequilibrium} holds.

  On the other hand if Equation~\eqref{eq:sdequilibrium}
  holds then 
  \begin{equation*}
    \frac{\partial S_i(a, b)}{\partial b} = 0
  \end{equation*}
  for all $a$ and $b$. Consequently, for each fixed $a$, we have that 
  $S_i(a, b)$ is constant as a function of~$b$.
  Since $S_i(a, a) = 0$, we
  obtain that $S_i(a, b) = 0$ for all $a, b$ which concludes the proof.
\end{proof}

\section{Maxwell-Cremona liftings of semi-discrete stressed surfaces}
\label{sec:mcl}

In this section we establish the semi-discrete version of the
Maxwell-Cremona lifting. While in the classical setting stressable
frameworks are projections of polyhedral surfaces, in our semi-discrete
setting frameworks correspond to projections of semi-discrete conjugate
surfaces (Theorem~\ref{thm:liftability}).
In order to be able to formulate the precise statement we need to define a height function (Definition~\ref{defn:heightfunction}) in analogy with the discrete setting by adding/integrating along a path, in our case an increasing
semi-discrete path.

\begin{figure}[tb]
    \hfill
  \hfill
  \begin{overpic}[width=.5\textwidth]{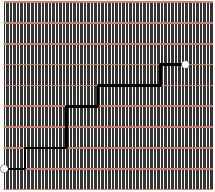}
    \put(0,-3){\small$\gamma_0$}
    \put(10,-3){\small$\gamma_1$}
    \put(24,-3){\small$\gamma_2{=}\gamma_3$}
    \put(81,-3){\small$\gamma_{k+1}$}
    \put(97,-4){\small$T$}
    \put(-7,0){\small$-1$}
    \put(-4,9){\small$0$}
    \put(-4,19){\small$1$}
    \put(-4,29){\small$2$}
    \put(-4,38){\small$3$}
    \put(-4,58){\small$k$}
    \put(-4,77){\small$n$}
    \put(-8,87){\small$n{+}1$}
    \put(81,63){\small\fboxsep1pt\colorbox{white}{$(k,s)$}}
  \end{overpic}
  \hfill
  \hfill
  \hfill
  \begin{overpic}[width=.40\textwidth]{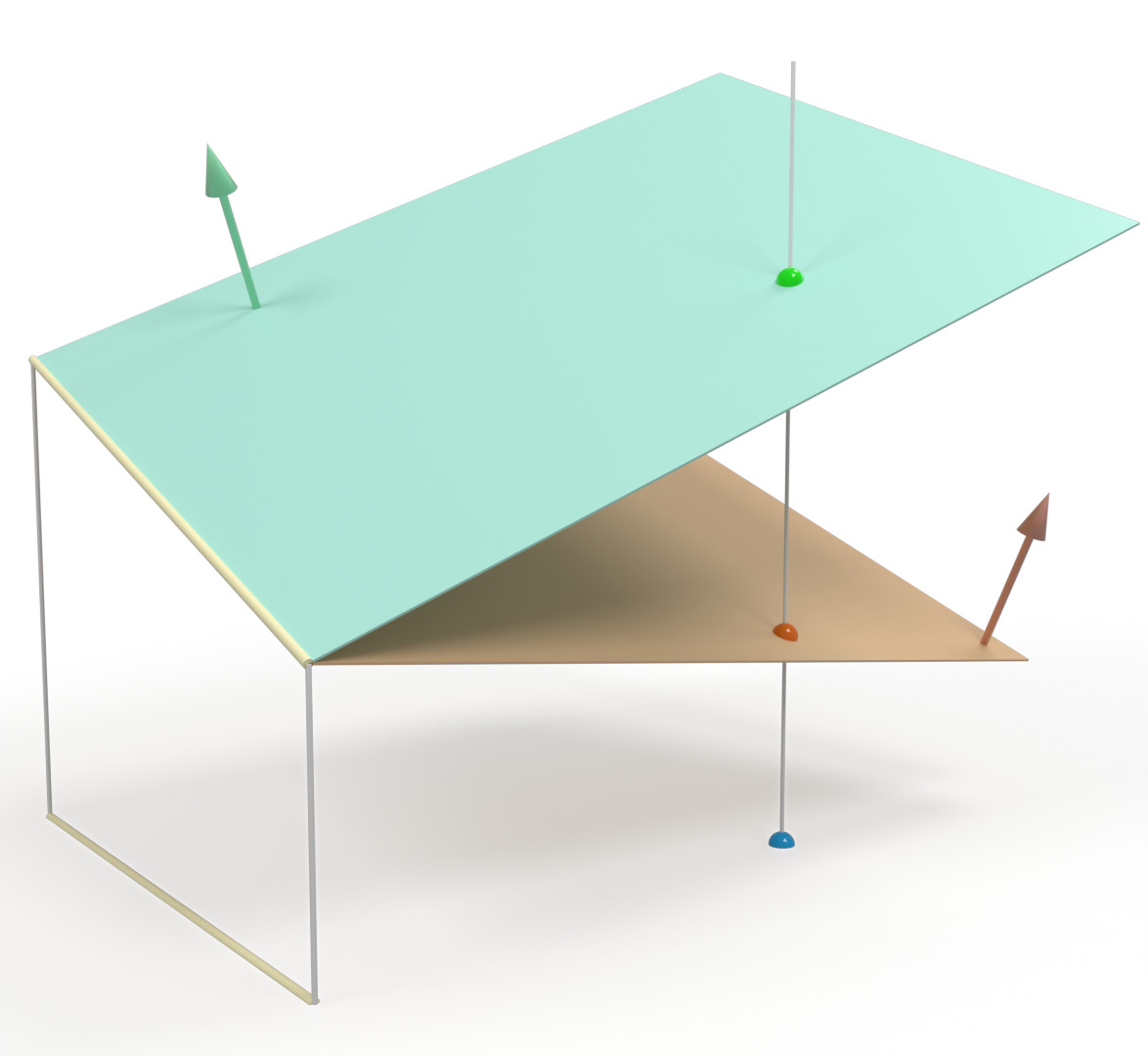}
    \put(5,22){\small$f_{i - 1}$}
    \put(28,4){\small$f_i$}
    \put(71,17){\small$p$}
    \put(84,48){\small$n_1$}
    \put(20,78){\small$n_2$}
    \put(63,39){\small$p_1$}
    \put(63,70){\small$p_2$}
    \put(75,44){\small$\varepsilon_1$}
    \put(44,81){\small$\varepsilon_2$}
  \end{overpic}
  \caption{\emph{Left:} An increasing semi-discrete path
  (Definition~\ref{defn:increasing}) in the semi-discrete parameter domain $U = \{0, \ldots, n\} \times [0, T]$.
  The horizontal lines correspond to smooth curves whereas the vertical lines correspond to discrete steps.
  The increasing semi-discrete path jumps at every $\gamma_i$ for $1 \leq i \leq k$.
  \emph{Right:} The classical Maxwell-Cremona lifting function adds up differences of heights of points projected onto edge-neighboring planes $\varepsilon_1, \varepsilon_2$.
  Their normal vectors are $n_1, n_2$.
  The height difference between $p_1$ and $p_2$ is 
  $\mu\det(\DD f_{i-1},p-f_i)$, where $\mu$ is the stress coefficient
  along the edge $f_{i - 1} f_i$ as explained before
  Definition~\ref{defn:heightfunction}.}
  \label{fig:path}
\end{figure}

\begin{definition}
  \label{defn:increasing}
  Let $0 \leq k \leq n$ and $s \in [0, T]$. 
  We say that a sequence $\gamma = (\gamma_i)_{i = 0}^{k + 1}$ with
  \begin{equation*}
    0 = \gamma_0 \leq \gamma_1 \leq \ldots \leq \gamma_{k + 1} = s
  \end{equation*}
  describes an \emph{increasing semi-discrete path} 
  as it determines a step function 
  \begin{equation*}
    t \mapsto (i, t)
    \quad\text{when}\quad 
    t \in [\gamma_i, \gamma_{i + 1}),
  \end{equation*}
  as illustrated in Figure~\ref{fig:path} (left).
\end{definition}

Note that if the interval $[\gamma_i, \gamma_{i + 1})$ is empty, the path
jumps over the smooth line $(i, t)$ as illustrated in
Figure~\ref{fig:path} (left) with $i = 2$.

In Definition~\ref{defn:heightfunction} we introduce our semi-discrete height function  that will be used to define the
lifting.
It is motivated by the classical lifting function which has the following property:
Consider an edge~$f_{i - 1} f_i$ of a discrete stressed framework in the plane~$\R^2$,
see Figure~\ref{fig:path}
(right) for an illustration. Let us call the stress coefficient $\mu$.
The discrete lifting is a polyhedral surface.
Let us denote its two planes which intersect in the edge that projects to $f_{i - 1} f_i$ by $\varepsilon_1,
\varepsilon_2$.
Their normal vectors are called $n_1, n_2$ and are
represented (or ``normalized'') by equal $z$-coordinats as follows:
$n_1 = (n_1^x, n_1^y, -1)$ and 
$n_2 = (n_2^x, n_2^y, -1)$. 
The classical Maxwell-Cremona lifting property also implies $n_1 - n_2 =
(\mu (f_{i - 1} - f_i)^\perp, 0)$ where ${}^\perp$ denotes the rotation
through 90 degrees (see, e.g.,~\cite{BorStr}).
Consider a point $p = (p^x, p^y, 0)$ and the two corresponding points
$p_1, p_2$ in the two planes $\varepsilon_1, \varepsilon_2$ which
vertically project to $p$.
The vertical heights of these two points over~$p$ are $z_j=n_j^x(p^x-f_i^x)+n_j^y(p^y-f_i^y)$ for $j=1,2$.
The difference in the height between $p_1$ and $p_2$ equals therefore
\begin{equation*}
  z_1 - z_2 
  = 
  \langle n_1 - n_2, (p - f_i, 0)\rangle 
  =
  \langle \mu (f_{i - 1} - f_i)^\perp, p - f_i\rangle
  =
  \mu \det(\DD f_{i - 1}, p - f_i).
\end{equation*}
The classical Maxwell-Cremona lifting function adds a term of the type above whenever a path in the projection crosses an edge $f_{i-1}f_i$ of the framework.
The following semi-discrete version is inspired by that formula. It
consists of a discrete part and a smooth part which are the limits of a
sum of such terms.

\begin{definition}
  \label{defn:heightfunction}
  Consider a framework $f$, a stress $(\lambda, \mu)$, a point $p = (k, s)
  \in U = \{0, \ldots, n\} \times [0, T]$ and an increasing semi-discrete path 
  $\gamma = (\gamma_i)_{i = 0}^{k + 1}$.
  We define a real-valued function that we call the \emph{height function}
  $H_\gamma$ as follows
  \begin{align*}
    H_\gamma(f(p)) 
    &= 
    \sum_{i = 0}^{k - 1} 
    \Big(\lambda_i(\gamma_{i + 1}) 
    \det(\dot f_i(\gamma_{i + 1}), f(p) - f_i(\gamma_{i + 1}))
    \Big)
    \\
    &-
    \sum_{i = 0}^k
    \int\limits_{\gamma_i}^{\gamma_{i + 1}} 
    \mu_{i - 1}(u) \det(\DD f_{i - 1}(u), f(p) - f_{i - 1}(u))\, \dd u.
  \end{align*}
\end{definition}

The height function~$H_\gamma$ is a map from~$U$ to~$\R$ but in general, the value
of $H_\gamma(f(p))$ depends on the choice of~$\gamma$.
We will show in Theorem~\ref{thm:hindependent} that the height function $H_\gamma(f(p))$ does \emph{not} depend on $\gamma$ if $(\lambda, \mu)$ is a self-stress for~$f$.
But first we will show the following slightly technical proposition which says that any increasing semi-discrete path~$\gamma$ can be
replaced by a simple ``L''-shaped path (see Figure~\ref{fig:simple-path}).

\begin{SCfigure}[2][tb]
  \begin{overpic}[width=.4\textwidth]{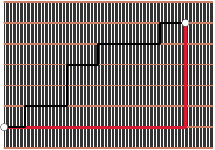}
    \put(49,45){\small\fboxsep1pt\colorbox{white}{$\gamma$}}
    \put(81,30){\small\fboxsep1pt\colorbox{white}{$\delta$}}
  \end{overpic}
  \caption{Two increasing semi-discrete paths from $(0,0)$ to $(k, s)$, a general path $\gamma$ and a simple ``L''-shaped path $\delta$.
  Computing the height function $H$ along both paths yields the same
  result if $(\lambda, \mu)$ is a self-stress for~$f$ (see
  Proposition~\ref{prop:pathindependence}).}
  \label{fig:simple-path}
\end{SCfigure}

\begin{proposition}
  \label{prop:pathindependence}
  Let $(\lambda, \mu)$ be a self-stress for a framework $f$ and let $p =
  (k, s)$.
  Let $\gamma,\delta$ be two increasing semi-discrete
  paths with 
  \begin{equation*}
    \delta_0 = \gamma_0,
    \qquad
    \delta_1 = \ldots = \delta_{k + 1} = \gamma_{k + 1} = s,
  \end{equation*}
  see Figure~\ref{fig:simple-path} for an illustration of such paths. Then
  $H_\gamma(f(p)) = H_\delta(f(p))$.
\end{proposition}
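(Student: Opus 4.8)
The plan is to regard $H_\gamma(f(p))$ as a function of the $k$ free jump times $\gamma_1,\dots,\gamma_k$ (the endpoints $\gamma_0 = 0$ and $\gamma_{k+1} = s$ being fixed by hypothesis) and to show that it does not depend on any of them. The admissible jump times form the simplex $\Sigma = \{0 \le \gamma_1 \le \dots \le \gamma_k \le s\}$, which is convex; since $\delta$ corresponds to the vertex $(s,\dots,s)$ and $\gamma$ to an arbitrary point of $\Sigma$, it suffices to prove that the gradient of $H_\gamma(f(p))$ with respect to $(\gamma_1,\dots,\gamma_k)$ vanishes on $\Sigma$. Indeed, the height function is $C^1$ in these variables (it is built from smooth evaluations and from integrals with smooth integrands and variable endpoints, all of which remain smooth even when consecutive jump times coincide), so a vanishing gradient forces $H$ to be constant along the straight segment joining $\gamma$ to $\delta$ inside $\Sigma$.

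The core of the argument is a single derivative computation. Fix $j \in \{1,\dots,k\}$ and write $P = f(p)$. Only three summands in Definition~\ref{defn:heightfunction} involve $\gamma_j$: the $i = j-1$ term of the first sum, and the two integrals indexed by $i = j-1$ (for which $\gamma_j$ is the upper limit) and $i = j$ (for which $\gamma_j$ is the lower limit). Differentiating the evaluation term and using $\tfrac{d}{dt}\det(\dot f_{j-1}, P - f_{j-1}) = \det(\ddot f_{j-1}, P - f_{j-1})$ — the remaining term $\det(\dot f_{j-1}, -\dot f_{j-1})$ vanishing — the first sum contributes $\det(\dot\lambda_{j-1}\dot f_{j-1} + \lambda_{j-1}\ddot f_{j-1},\, P - f_{j-1})$ at $\gamma_j$, while the fundamental theorem of calculus turns the two integrals into $+\mu_{j-1}\det(\DD f_{j-1}, P - f_{j-1})$ and $-\mu_{j-2}\det(\DD f_{j-2}, P - f_{j-2})$, all evaluated at $\gamma_j$.

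The one delicate point is that the last term is based at $f_{j-2}$ rather than at $f_{j-1}$, so the self-stress equation cannot yet be read off. I would remove this mismatch using $P - f_{j-2} = (P - f_{j-1}) + \DD f_{j-2}$ together with $\det(\DD f_{j-2}, \DD f_{j-2}) = 0$, which gives $\det(\DD f_{j-2}, P - f_{j-2}) = \det(\DD f_{j-2}, P - f_{j-1})$. After this substitution all three contributions share the common second slot $P - f_{j-1}(\gamma_j)$, and by multilinearity of the determinant
\[
\frac{\partial H_\gamma(f(p))}{\partial \gamma_j}
= \det\!\big(\dot\lambda_{j-1}\dot f_{j-1} + \lambda_{j-1}\ddot f_{j-1} + \mu_{j-1}\DD f_{j-1} - \mu_{j-2}\DD f_{j-2},\, P - f_{j-1}\big)\Big|_{\gamma_j}.
\]
The first argument is precisely the left-hand side of the self-stress equation~\eqref{eq:sdequilibrium} at index $i = j-1$ (the boundary curves $f_{-1}$ and the stress $\mu_{-1}$ introduced in Definition~\ref{def:sdframework} are exactly what makes the case $j=1$ legitimate), hence it is zero and the partial derivative vanishes.

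Since this holds for every $j$ and every configuration in $\Sigma$, the gradient of $H$ is identically zero there; parametrizing the segment from $\gamma$ to $\delta$ by $r \mapsto (1-r)\gamma + r\delta$, the chain rule gives $\tfrac{d}{dr}H = 0$, so $H_\gamma(f(p)) = H_\delta(f(p))$. I expect the main obstacle to be the derivative bookkeeping — correctly identifying which summands depend on $\gamma_j$ and with which signs — together with the base-point shift that realigns the $f_{j-2}$ term to $f_{j-1}$; once the determinant is brought to a common base point, recognizing the self-stress equation is immediate.
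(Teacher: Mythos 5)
Your proof is correct and follows essentially the same route as the paper's: the key step in both is that the derivative of $H$ with respect to a jump time collapses, after the base-point shift $\det(\DD f_{j-2}, f(p)-f_{j-2}) = \det(\DD f_{j-2}, f(p)-f_{j-1})$, to the determinant of the left-hand side of the equilibrium equation~\eqref{eq:sdequilibrium} against $f(p)-f_{j-1}(\gamma_j)$, which vanishes. The only difference is organizational: the paper treats $k=1$ by varying a single jump time and then appeals to induction, whereas you compute the full gradient on the simplex of jump times and use its convexity, which in effect makes the paper's terse induction step explicit.
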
  
\begin{proof}
  Let us assume $k = 1$. Then $\gamma = (\gamma_0, \gamma_1, \gamma_2)$
  and 
  $\delta = (\gamma_0, \gamma_2, \gamma_2)$. 
  Let us further consider the increasing semi-discrete path
  $\hat \gamma = (\gamma_0, t, \gamma_2)$ with $\gamma_0 \leq t\leq\gamma_2$.
  We set $D(t) = H_{\hat\gamma}(f(p)) - H_\delta(f(p))$. With this notation
  we have $D(\gamma_1) = H_\gamma(f(p)) - H_\delta(f(p))$. Consequently, if we can 
  show that $D = 0$, this would imply the proposition. We have
  \medskip

  \mmeq{}{
    H_{\hat\gamma}(f(p)) 
    =
    \lambda_0(t) \det(\dot f_0(t), f(p) - f_0(t))
  }\medskip

  \mmeq{}{
    -
    \int_{\gamma_0}^{t} 
    \mu_{-1}(u) \det(\DD f_{-1}(u), f(p) - f_{-1}(u))\, \dd u
    -
    \int_{t}^{\gamma_2} 
    \mu_0(u) \det(\DD f_0(u), f(p) - f_0(u))\, \dd u.
  }\medskip
  
  Using $\DD f_{-1}=f_0-f_{-1}$
  we obtain
  $\det(\DD f_{-1},f(p)-f_{-1})
  =\det(\DD f_{-1},f(p)-f_0)$.
  Differentiating~$D(t)$ yields
  \begin{equation*}
    \begin{aligned}
      \frac{\partial}{\partial t} D(t)
      =&
      \frac{\partial}{\partial t} H_{\hat\gamma}(f(p)) 
      - 
      \frac{\partial}{\partial t} H_\delta(f(p))
      =
      \frac{\partial}{\partial t} H_{\hat\gamma}(f(p)) 
      \\
      =&
      \dot \lambda_0(t) \det(\dot f_0(t), f(p) - f_0(t))
      +
      \lambda_0(t)\det(\ddot f_0(t),f(p)-f_0(t))
      \\
      &-
      \mu_{-1}(t)\det(\DD f_{-1}(t),f(p)-f_{-1}(t))
      +
      \mu_0(t)\det(\DD f_0(t),f(p)-f_0(t))\\
      =&\det\big(\dot \lambda_0(t)\dot f_0(t) +\lambda_0(t)\ddot f_0(t)+\mu_0(t)\DD f_0(t)-\mu_{-1}(t)\DD f_{-1}(t),f(p)-f_0(t)\big). 
    \end{aligned}
  \end{equation*}
  Equation~\eqref{eq:sdequilibrium} implies that one of the vectors in the determinant is zero, therefore
  \begin{equation}
    \label{eq:D}
    \frac{\partial}{\partial t} D(t)=0
  \end{equation}
  and $D$ is constant.
  For $t=\gamma_2$ we have $\delta=\hat\gamma$ and therefore 
  $D(\gamma_2)=0$. Consequently, $D =
  0$ and therefore $H_\gamma(f(p)) = H_\delta(f(p))$.
  Applying induction implies $H_\gamma(f(p)) = H_\delta(f(p))$ for $k > 1$.
\end{proof}

\begin{theorem}
  \label{thm:hindependent}
  Let $(\lambda, \mu)$ be a stress for a framework $f$.
  Then the following are equivalent:
  \begin{enumerate}
    \item\label{itm:1} $(\lambda, \mu)$ is a self-stress for $f$. 
    \item\label{itm:2} $H_\gamma(f(p))$ is independent of the choice of
      $\gamma$ for any $p \in U = \{0, \ldots, n\} \times [0, T]$.
    \item\label{itm:3} $H_\gamma(f(n, T))$ is independent of the choice of
      $\gamma$.
  \end{enumerate}
\end{theorem}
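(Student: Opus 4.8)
The plan is to prove the cycle of implications \ref{itm:1} $\Rightarrow$ \ref{itm:2} $\Rightarrow$ \ref{itm:3} $\Rightarrow$ \ref{itm:1}. The first implication is an immediate consequence of Proposition~\ref{prop:pathindependence}, the second is a trivial specialization, and essentially all of the work sits in the closing implication \ref{itm:3} $\Rightarrow$ \ref{itm:1}, which has to run the local differentiation that produced \eqref{eq:sdequilibrium} backwards.

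For \ref{itm:1} $\Rightarrow$ \ref{itm:2} I fix $p = (k,s)$ and observe that the ``L''-shaped path $\delta$ with $\delta_0 = 0$ and $\delta_1 = \dots = \delta_{k+1} = s$ is determined by $p$ alone, independently of any chosen path. Since $(\lambda,\mu)$ is a self-stress, Proposition~\ref{prop:pathindependence} yields $H_\gamma(f(p)) = H_\delta(f(p))$ for \emph{every} increasing semi-discrete path $\gamma$ ending at $p$; thus $H_\gamma(f(p))$ equals the $\gamma$-independent value $H_\delta(f(p))$, which is precisely \ref{itm:2}. The implication \ref{itm:2} $\Rightarrow$ \ref{itm:3} is just the special case $p = (n,T)$.

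The substance is \ref{itm:3} $\Rightarrow$ \ref{itm:1}. Here I repeat the computation from the proof of Proposition~\ref{prop:pathindependence}, but read in reverse. Starting from an arbitrary increasing path to $(n,T)$, I slide a single jump time $\gamma_m = t$ while freezing the remaining jumps. Writing $L_i$ for the left-hand side of \eqref{eq:sdequilibrium}, the same cancellations, using $\DD f_{m-2} = f_{m-1} - f_{m-2}$ to collapse the two reference points, give
\begin{equation*}
  \frac{\partial}{\partial t}\, H_\gamma(f(n,T)) = \det\big(L_{m-1}(t),\, f(n,T) - f_{m-1}(t)\big).
\end{equation*}
Independence of $H_\gamma(f(n,T))$ forces this derivative to vanish for all admissible $t$, so for every $i \in \{0,\dots,n-1\}$ and every $t$ the vector $L_i(t)$ is parallel to $f(n,T) - f_i(t)$.

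The main obstacle is to promote these parallelism conditions -- a single scalar equation per point -- to the full vector identity $L_i \equiv 0$ required by \eqref{eq:sdequilibrium}. My plan is to recover a second, independent reference direction by letting the evaluation point move: the same single-jump slide run along paths ending at a \emph{variable} $p$ produces $\det(L_i(t), f(p) - f_i(t)) = 0$, and regularity of $f$, which makes the image fill a planar region, supplies two points $p$ for which $f(p) - f_i(t)$ are linearly independent, forcing $L_i(t) = 0$. The delicate point is that \ref{itm:3} only furnishes the single corner $(n,T)$, so I must first argue that independence there propagates to all $p$; I would do this through the observation that each path difference $H_\gamma(q) - H_\delta(q)$ is affine in the point $q$. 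I expect the genuine difficulty to concentrate at the top index $i = n$, where the boundary stress $\mu_n$ enters and no interior jump crosses the last curve, so that handling it should force me to exploit the auxiliary curve $f_{n+1}$ -- this is the step I would scrutinize most carefully.
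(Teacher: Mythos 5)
Your implications (i) $\Rightarrow$ (ii) and (ii) $\Rightarrow$ (iii) are fine and coincide with the paper's. The gap sits in (iii) $\Rightarrow$ (i), specifically in how you propagate path-independence from the single corner $(n,T)$ to a general endpoint $p$. Sliding one jump of a path into $(n,T)$ gives, as you say, only the scalar identity $\det\big(v_i(t),\, f(n,T)-f_i(t)\big)=0$, where $v_i$ denotes the left-hand side of \eqref{eq:sdequilibrium} (your $L_i$), and you correctly see that a second reference direction, hence path-independence at other endpoints $p$, is needed. But your proposed bridge --- ``each path difference $H_\gamma(q)-H_\delta(q)$ is affine in $q$'' --- does not close this: an affine function of $q$ that vanishes at the single point $q=f(n,T)$ need not vanish at $q=f(p)$. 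Writing the difference as $q\mapsto\det(W_\gamma-W_\delta,\,q)+c$ with $W_\gamma=\sum_i\lambda_i(\gamma_{i+1})\dot f_i(\gamma_{i+1})-\sum_i\int_{\gamma_i}^{\gamma_{i+1}}\mu_{i-1}\DD f_{i-1}$, you would additionally need $W_\gamma=W_\delta$, i.e.\ path-independence of the accumulated force, which is essentially another instance of what you are trying to prove. The paper instead proves (iii) $\Rightarrow$ (ii) by extending both paths to $(n,T)$ with a common tail and cancelling the matching terms, and then runs (ii) $\Rightarrow$ (i) with the endpoint placed on the \emph{next} curve: from $\det\big(v_0(t),\,f_1(s)-f_0(t)\big)=0$ for all $s,t$ it substitutes $t=s$ to obtain the direction $\DD f_0(s)$, and differentiates in $s$ before substituting $t=s$ to obtain the second direction $\dot f_1(s)$; regularity \eqref{eq:regular} then forces $v_0=0$. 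Your alternative --- two endpoints $p$ with linearly independent $f(p)-f_i(t)$ --- would serve the same purpose, but only once (ii) is actually in hand, which your argument does not deliver.

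Two further points. Your worry about the top index $i=n$ is well-founded: $\lambda_n$ and $\mu_n$ never appear in any height function over $U=\{0,\ldots,n\}\times[0,T]$, so no sliding argument with endpoints in $U$ can produce \eqref{eq:sdequilibrium} at $i=n$; your proposal flags this but leaves it unresolved (the paper's proof only carries out the computation explicitly for $i=0$). Also note that the common-tail cancellation itself leaves all remaining determinants referenced to $f(n,T)$ rather than $f(p)$, so the very affine-in-$q$ issue you raise must be confronted on any route from (iii) back to (ii); it needs a genuine argument, not just the observation that the difference is affine.
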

\begin{proof}
  The implication \ref{itm:1}$\Rightarrow$\ref{itm:2} follows directly
  from Proposition~\ref{prop:pathindependence} as all paths $\gamma$ yield the
  same value $H_\gamma(f(p))$ as the special path $\delta$ if $(\lambda,
  \mu)$ is a self-stress.

  As for the implication \ref{itm:2}$\Rightarrow$\ref{itm:1}, we consider
  the increasing semi-discrete path $\hat\gamma = (\gamma_0, t, \gamma_2)$.
  Since we assume that $H_\gamma$ does not depend on $\gamma$ we have that 
  $H_{\hat\gamma}(f(p))$ is constant as a function of~$t$ and therefore 
  $\frac{\partial}{\partial t} H_{\hat\gamma}(f(p))=0$.
  Using Equation~\eqref{eq:D}, we obtain
  \begin{equation}
    \label{eq:DD}
    \frac{\partial}{\partial t} H_{\hat\gamma}(f(p)) = 
    \det(\dot \lambda_0(t) \dot f_0(t) + \lambda_0(t) \ddot f_0(t) +
    \mu_0(t) \DD f_0(t) 
    - \mu_{-1}(t) \DD f_{-1}(t), f(p) - f_0(t)) 
    = 0.
  \end{equation}
  Furthermore, this equation also holds for any~$s$ and any $p=(1,s)$.
  We then have $f(p)=f(1,s)=f_1(s)$. 
  Differentiating Equation~\eqref{eq:DD} with respect to~$s$ and then substituting
  $t=s$ yields
  \begin{equation*}
    \det(\dot \lambda_0(s) \dot f_0(s) + \lambda_0(s) \ddot f_0(s) +
    \mu_0(s) \DD f_0(s) 
    - \mu_{-1}(s) \DD f_{-1}(s), \dot f_1(s)) 
    = 0.
  \end{equation*}
  Substituting $t = s$
  into Equation~\eqref{eq:DD} yields
  \begin{equation*}
    \det(\dot \lambda_0(s) \dot f_0(s) + \lambda_0(s) \ddot f_0(s) +
    \mu_0(s) \DD f_0(s) 
    - \mu_{-1}(s) \DD f_{-1}(s), \DD f_0(s)) 
    = 0.
  \end{equation*}
  Consequently, the determinant of the vector
  \begin{equation*}
  v=\dot\lambda_0(s)\dot f_0(s)+\lambda_0(s)\ddot f_0(s)+\mu_0(s)\DD f_0(s)-\mu_{-1}(s)\DD f_{-1}(s)
  \end{equation*}
  with each of the two linearly independent vecotors $\DD f_0(s),\dot f_1(s)$ vanishes and 
  therefore the vector~$v$ itself vanishes. 
  This implies the
  stressability condition~\eqref{eq:sdequilibrium}, hence
  $(\lambda, \mu)$ is a self-stress.
  
  The implication \ref{itm:2}$\Rightarrow$\ref{itm:3} follows from
  choosing $p = (n, T)$.
  As for the implication \ref{itm:3}$\Rightarrow$\ref{itm:2} let $\gamma$
  and $\varphi$ be two increasing semi-discrete paths ending at $p \in U$.
  We extend them both to $(n, T)$ by adding the same extra path and obtain
  the extended paths $\tilde \gamma$ and $\tilde \varphi$. Then
  by~\ref{itm:3} we have that $H_{\tilde \gamma}((n, T)) = H_{\tilde
  \varphi}((n, T))$. Expanding this equation using
  Definition~\ref{defn:heightfunction} and cancelling matching terms from
  the same extra path yields $H_{\gamma}(f(p)) = H_{\varphi}(f(p))$.
\end{proof}

We now thake this heigh function that does not depend on the path for
self-stressed frameworks and define a surface -- a lifting -- orthogonally
above the framework.

\begin{definition}
\label{def:lifting}
  Let $(\lambda,\mu)$ be a self-stress for a framework~$f$.
  Since in this case $H_\gamma(p)$ does not depend on $\gamma$,
  we can
  define the following function 
  \begin{align*}
    L : U &\to \R^2 \times \R\\
    p &\mapsto L(p) = (f(p), H_\gamma(f(p))).
  \end{align*}
  We call this function a \emph{$($semi-discrete$)$ lifting} of the self-stressed
  framework~$f$.
\end{definition}

The lifting of a curve~$f_k(t)$ of the self-stressed framework~$f$ is a space curve $L(k,t)=(f_k(t),H_\gamma(f(k,t)))$ for some increasing semi-discrete path~$\gamma$.
For the specific path~$\gamma$ with
\begin{equation*}
  \gamma_0=\ldots=\gamma_k=0 
  \quad\text{and}\quad
  \gamma_{k + 1}=t,
\end{equation*}
we obtain $L(k, t) = (f_k(t),H_\gamma(f(k,t)))$ with
\begin{equation}
\label{eq:Hk}
\begin{aligned}  
  &H_\gamma(f(k, t))=\\
  &=
  \sum_{i = 0}^{k - 1} 
  \Big(\lambda_i(0) 
  \det(\dot f_i(0), f_k(t) - f_i(0))
  \Big)
  -
  \int_0^t 
  \mu_{k - 1}(u) \det(\DD f_{k - 1}(u), f_k(t) - f_{k - 1}(u))\, \dd u.
\end{aligned}  
\end{equation}

\begin{proposition}
  \label{prop:affine}
  Let $f$ be a self-stressed framework with stress functions $(\lambda,
  \mu)$. Let us further consider a framework $g$ obtained by reversing the
  order, i.e., $g_k = f_{n - k}$, and with stress functions $(\hat \lambda,
  \hat \mu)$ where $\hat \lambda_k = -\lambda_{n - k}$ and $\hat \mu_k =
  -\mu_{n - k - 1}$.
  Then, the lifting of $g$ is the same as the lifting of $f$ up to adding
  an affine function to the height function $($and up to reversing the
  order$)$.
  The affine function reads
  \begin{equation*}
    x \mapsto 
    \sum_{\substack{i = 0}}^{n} 
    \Big(\lambda_i(0) 
    \det(\dot f_i(0), x - f_i(0))
    \Big).
  \end{equation*}
\end{proposition}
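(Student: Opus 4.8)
The plan is to reduce everything to the two height functions, evaluate them along the most convenient paths, and then let the equilibrium equation~\eqref{eq:sdequilibrium} do the work. Before that can be meaningful I must check that $g$ together with $(\hat\lambda,\hat\mu)$ is indeed self-stressed, since otherwise its lifting is not defined. Substituting $g_k=f_{n-k}$, $\hat\lambda_k=-\lambda_{n-k}$, $\hat\mu_k=-\mu_{n-k-1}$ and the resulting identities $\DD g_k=-\DD f_{n-k-1}$, $\DD g_{k-1}=-\DD f_{n-k}$ into the left-hand side of~\eqref{eq:sdequilibrium} for $g$ at index $k$, a short sign check shows that this expression equals $-1$ times the left-hand side of~\eqref{eq:sdequilibrium} for $f$ at the mirrored index $j:=n-k$. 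The latter vanishes, so $(\hat\lambda,\hat\mu)$ is a self-stress and both liftings are well defined.

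Since $f$ and $g$ are both self-stressed, Theorem~\ref{thm:hindependent} allows me to compute each height along the staircase path that performs all discrete jumps at $t=0$, that is, via formula~\eqref{eq:Hk}. For $f$ this reproduces $H(f(j,t))$ as in~\eqref{eq:Hk}. For $g$, after the substitutions above, the discrete part becomes $-\sum_{m=j+1}^{n}\lambda_m(0)\det(\dot f_m(0),f_j(t)-f_m(0))$ and the smooth part becomes $-\int_0^t\mu_j(u)\det(\DD f_j(u),f_j(t)-f_{j+1}(u))\,\dd u$. In both smooth integrals I would simplify the second determinant argument, using that adding a multiple of the first argument leaves the determinant unchanged; this replaces $f_j(t)-f_{j-1}(u)$ and $f_j(t)-f_{j+1}(u)$ both by $f_j(t)-f_j(u)$. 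Note that the two integrals now run over opposite edges of the curve $f_j$, namely $f_{j-1}f_j$ for $f$ and $f_jf_{j+1}$ for $g$; this asymmetry is the heart of the matter.

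The decisive step is to trade one edge for the other by means of~\eqref{eq:sdequilibrium} for $f$ at index $j$, which I rewrite as $\mu_j\DD f_j=\mu_{j-1}\DD f_{j-1}-\tfrac{\dd}{\dd u}(\lambda_j\dot f_j)$. Inserting this into the $g$-integral and using linearity of the determinant expresses it as the $f$-integral over the edge $f_{j-1}f_j$ plus the correction $\int_0^t\det\big(\tfrac{\dd}{\dd u}(\lambda_j\dot f_j),f_j(t)-f_j(u)\big)\,\dd u$. Because $\det(\lambda_j\dot f_j,-\dot f_j)=0$, this integrand is the exact $u$-derivative of $\det(\lambda_j\dot f_j,f_j(t)-f_j(u))$, so integration by parts collapses it to the single boundary term $-\lambda_j(0)\det(\dot f_j(0),f_j(t)-f_j(0))$, the upper endpoint contributing nothing since $f_j(t)-f_j(t)=0$.

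With this, the two smooth integrals cancel between $H_g(g_k(t))$ and $H(f(j,t))$, and the surviving discrete contributions --- the $g$-sum over $\{j+1,\dots,n\}$, the single boundary term at index $j$, and the $f$-sum over $\{0,\dots,j-1\}$ --- assemble into one sum over all of $\{0,\dots,n\}$, giving $H(f(n-k,t))=H_g(g_k(t))+\sum_{i=0}^{n}\lambda_i(0)\det(\dot f_i(0),f_{n-k}(t)-f_i(0))$. Since the last sum is affine in the planar point $f_{n-k}(t)$, this is exactly the claim: after reversing the index, the lifting of $g$ and that of $f$ differ by the stated affine function. I expect the third step to be the only real obstacle --- recognizing that~\eqref{eq:sdequilibrium} is precisely what converts the edge below the curve into the edge above it modulo a total derivative, and that the resulting integration by parts produces exactly the missing index-$j$ term needed to complete the partial sums; the remainder is careful tracking of the order reversal and of the signs coming from $\DD g_i=-\DD f_{n-i-1}$.
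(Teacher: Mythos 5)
Your proposal is correct and follows essentially the same route as the paper's proof: evaluate both height functions via the staircase formula~\eqref{eq:Hk}, normalize the determinant arguments to $f_j(t)-f_j(u)$, use the equilibrium equation~\eqref{eq:sdequilibrium} to trade the edge $f_jf_{j+1}$ for $f_{j-1}f_j$ modulo the total derivative $\tfrac{\dd}{\dd u}(\lambda_j\dot f_j)$, and integrate by parts to produce the missing index-$j$ term of the affine sum. Your preliminary verification that $(\hat\lambda,\hat\mu)$ is a self-stress for $g$ is a worthwhile addition that the paper leaves implicit.
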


\begin{proof}
  Let us denote by $H^f$ and $H^g$ the height functions corresponding to
  frameworks $f$ and $g$, respectievely.
  Using Equation~\eqref{eq:Hk}, we can express the difference
  $H^f_\gamma(f(k, t)) - H^g_\gamma(g(n - k, t))$ as the sum of
  \begin{equation*}
    \sum_{i = 0}^{k - 1} 
    \Big(\lambda_i(0) 
    \det(\dot f_i(0), f_k(t) - f_i(0))
    \Big)
    -
    \sum_{i = 0}^{n - k - 1} 
    \Big(\hat \lambda_i(0) 
    \det(\dot g_i(0), g_{n - k}(t) - g_i(0))
    \Big)
  \end{equation*}
  and the integral part
  \begin{align*}
    &-\int_0^t 
    \mu_{k - 1}(u) \det(\DD f_{k - 1}(u), f_k(t) - f_{k - 1}(u))\, \dd u
    \\
    &+
    \int_0^t 
    \hat \mu_{n - k - 1}(u)
    \det(\DD g_{n - k - 1}(u), g_{n - k}(t) - g_{n - k - 1}(u))\, \dd u.
  \end{align*}
  We use $g_{n - k} = f_k$,
  $g_{n - k - 1} = f_{k + 1}$,
  $\DD g_{n - k - 1}=f_k-f_{k + 1}=-\DD f_k$
  to rewrite these two terms as
  \begin{equation}
  \label{eq:affinesum}
  \begin{aligned}
    &
    \sum_{i = 0}^{k - 1} 
    \Big(\lambda_i(0) 
    \det(\dot f_i(0), f_k(t) - f_i(0))
    \Big)
    +
    \sum_{i = 0}^{n - k - 1} 
    \Big(\lambda_{n - i}(0) 
    \det(\dot f_{n - i}(0), f_k(t) - f_{n - i}(0))
    \Big)
    \\
    &=
    \sum_{\substack{i = 0\\i \neq k}}^{n} 
    \Big(\lambda_i(0) 
    \det(\dot f_i(0), f_k(t) - f_i(0))
    \Big)
  \end{aligned}
  \end{equation}
  and
  \begin{equation*}
    -\int_0^t 
    \mu_{k - 1}(u) \det(\DD f_{k - 1}(u), f_k(t) - f_{k - 1}(u))\, \dd u
    +
    \int_0^t 
    \mu_k(u) \det(\DD f_k(u), f_k(t) - f_{k + 1}(u))\, \dd u.
  \end{equation*}
  Since 
  $\det(\DD f_{k - 1}(u), f_k(t) - f_{k - 1}(u))
  = 
  \det(\DD f_{k - 1}(u), f_k(t) - f_k(u))$
  and 
  $\det(\DD f_k(u), f_k(t) - f_{k + 1}(u))
  =
  \det(\DD f_k(u), f_k(t) - f_k(u))$,
  the integral term becomes
  \begin{equation*}
    -\int_0^t 
    \det(\mu_{k - 1}(u) \DD f_{k - 1}(u) - \mu_k(u) \DD f_k(u), 
    f_k(t) - f_k(u))\, \dd u.
  \end{equation*}
  Using the stressability condition in Equation~\eqref{eq:sdequilibrium},
  we obtain 
  \begin{equation*}
    -\int_0^t 
    \det\bigg(\frac{\partial (\lambda_k(u) \dot f_k(u))}{\partial u}, 
    f_k(t) - f_k(u)\bigg)\, \dd u.
  \end{equation*}
  Integration by parts yields
  \begin{equation}
  \label{eq:integralterm}
    \lambda_k(0)\det(\dot f_k(0),f_k(t)-f_k(0)).
  \end{equation}
  Adding terms~\eqref{eq:affinesum} and~\eqref{eq:integralterm}, we obtain
  \begin{equation*}
    H^f_\gamma(f(k, t)) - H^g_\gamma(g(n - k, t))
    = 
    \sum_{\substack{i = 0}}^{n} 
    \Big(\lambda_i(0) 
    \det(\dot f_i(0), f_k(t) - f_i(0))
    \Big),
  \end{equation*}
  which is an affine function of~$f_k(t)$. 
\end{proof}

The following Gra\ss mann-Pl{\"u}cker relation will be needed in the proof
of Theorem~\ref{thm:liftability} and can be found, e.g.,
in~\cite[Th.~6.2]{richter-gebert-2011}.

\begin{lemma}[Gra\ss mann-Pl{\"u}cker relation]
  \label{lem:bracket}
  Any four vectors $a, b, c, d \in \R^2$ satisfy
  \begin{equation*}
    \det(a, b) \det(c, d)
    -
    \det(a, c) \det(b, d)
    +
    \det(a, d) \det(b, c)
    = 0.
  \end{equation*}
\end{lemma}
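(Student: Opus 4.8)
The plan is to prove this purely algebraic identity by exploiting the fact that in the two-dimensional space $\R^2$ any three vectors are linearly dependent, which makes one of the four vectors expressible through two others. My first step would be to establish the Cramer-type expansion
\begin{equation*}
  \det(a, b)\, c = \det(c, b)\, a + \det(a, c)\, b,
\end{equation*}
valid for all $a, b, c \in \R^2$. When $a, b$ are linearly independent this is immediate: writing $c = \alpha a + \beta b$ and applying the functionals $\det(\,\cdot\,, b)$ and $\det(a, \,\cdot\,)$ recovers $\alpha = \det(c, b)/\det(a, b)$ and $\beta = \det(a, c)/\det(a, b)$, which substituted back gives the displayed identity.

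The second step is to pair this with the remaining vector $d$. Applying the linear functional $\det(\,\cdot\,, d)$ to both sides yields
\begin{equation*}
  \det(a, b)\det(c, d) = \det(c, b)\det(a, d) + \det(a, c)\det(b, d).
\end{equation*}
Using the antisymmetry $\det(c, b) = -\det(b, c)$ and rearranging gives exactly
\begin{equation*}
  \det(a, b)\det(c, d) - \det(a, c)\det(b, d) + \det(a, d)\det(b, c) = 0,
\end{equation*}
which is the claim.

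The only point requiring care — and the one I would regard as the main (if minor) obstacle — is the degenerate configuration in which the pivot pair $a, b$ is linearly dependent, so that $\det(a, b) = 0$ and the expansion cannot be solved for $\alpha, \beta$. I would dispose of this in one of two ways. The cleanest is a density argument: the left-hand side is a polynomial in the eight coordinates of $a, b, c, d$, it vanishes on the Zariski-dense open locus where $a, b$ are independent, and hence vanishes identically. Alternatively, one verifies the degenerate case directly: if $b = \lambda a$ then $\det(b, c) = \lambda \det(a, c)$ and $\det(b, d) = \lambda \det(a, d)$, so the two surviving terms cancel, while the first term vanishes because $\det(a, b) = 0$; and if $a = 0$ every term is zero.

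As a fallback, the whole identity can also be proven by a single brute-force coordinate expansion with $a = (a_1, a_2)$ and so on, after which all monomials cancel in pairs. I prefer the conceptual route above, since it exhibits the statement as the Pl\"ucker relation defining the Grassmannian $\mathrm{Gr}(2, 4)$, but the coordinate computation has the merit of treating all configurations — degenerate or not — uniformly and without any case distinction.
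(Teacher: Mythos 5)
Your proof is correct and complete. Note, however, that the paper itself offers no proof of this lemma at all: it simply cites the literature (Richter-Gebert, Theorem~6.2), so there is no argument in the paper to compare yours against. Your route --- the Cramer-type expansion $\det(a,b)\,c = \det(c,b)\,a + \det(a,c)\,b$ followed by an application of the linear functional $\det(\,\cdot\,,d)$ --- is a standard and clean derivation, and your treatment of the degenerate case (either by Zariski density of the locus where $a,b$ are independent, or by the direct check when $b=\lambda a$ or $a=0$) closes the only gap the main argument leaves. Either that or the brute-force coordinate expansion would serve as a perfectly adequate self-contained proof; the conceptual version has the added benefit of identifying the identity as the defining Pl\"ucker relation, which is in the spirit of how the paper uses it repeatedly in the proof of Theorem~\ref{thm:liftability}.
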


The following theorem is the semi-discrete analog of the well known
Maxwell-Cremona lifting property.

\begin{theorem}
  \label{thm:liftability}
  The following two complementary statements relate semi-discrete conjugate surfaces (Definition~\ref{def:conjugate}) and semi-discrete liftings of self-stressed frameworks (Definition~\ref{def:lifting}).
  \begin{enumerate}
    \item\label{itm:lift1} 
      Let $(\lambda, \mu)$ be a self-stress for a framework $f$. Then its
      semi-discrete lifting $L(U)$ is a semi-discrete conjugate surface in
      $\R^3$.
    \item\label{itm:lift2} The regular orthogonal projection $f$ of a
      semi-discrete conjugate surface $F$ in $\R^3$ to the plane $\R^2$ is
      a stressable framework. The stress functions are given by
      Equations~\eqref{eq:stresslambda} and~\eqref{eq:stressmu} below.
  \end{enumerate}
\end{theorem}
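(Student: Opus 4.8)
The theorem has two directions. For (\ref{itm:lift1}), I am handed a self-stress and the lifting $L$ from Definition~\ref{def:lifting}, and I must show each strip of $L$ is developable. By Remark~\ref{rem:conjugate}, this means showing that for each $i$ the three vectors $\DD L_i$, $\dot L_i$, $\dot L_{i+1}$ are linearly dependent in $\R^3$, equivalently that their $3\times 3$ determinant vanishes. The plan is to compute the three tangent/difference vectors of $L$ explicitly. Since $L(i,t)=(f_i(t),H_\gamma(f(i,t)))$, the first two coordinates of $\dot L_i$, $\dot L_{i+1}$, $\DD L_i$ are just $\dot f_i$, $\dot f_{i+1}$, $\DD f_i$, which already satisfy the planar conjugacy/linear-dependence structure; the work is in the third (height) coordinate. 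For this I would use formula~\eqref{eq:Hk} with the convenient path $\gamma_0=\dots=\gamma_k=0$, $\gamma_{k+1}=t$, differentiate it in $t$ to get $\partial_t H_\gamma(f(k,t))$, and difference it in $k$. The height-slope of the strip is governed by $\mu_{k-1}\DD f_{k-1}$ terms; I expect the developability determinant to collapse to a multiple of the self-stress equation~\eqref{eq:sdequilibrium}, so that the latter forces the $3\times 3$ determinant to vanish. The Gra\ss mann--Pl\"ucker relation (Lemma~\ref{lem:bracket}) is the expected tool for reorganizing the products of $2\times 2$ determinants that arise when expanding the $3\times 3$ determinant along its height row.

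For (\ref{itm:lift2}), I am given a conjugate surface $F:U\to\R^3$, I project orthogonally to get $f:U\to\R^2$ (the first two coordinates), and I must \emph{produce} stress functions $\lambda,\mu$ satisfying~\eqref{eq:sdequilibrium}. The idea, as in the classical Maxwell--Cremona correspondence, is to read the stresses off the geometry of $F$. Writing $F_i=(f_i,h_i)$ with height $h_i$, conjugacy of the $i$-th strip (Remark~\ref{rem:conjugate}) says $\DD F_i,\dot F_i,\dot F_{i+1}$ are linearly dependent in $\R^3$; projecting, the planar triple $\DD f_i,\dot f_i,\dot f_{i+1}$ is linearly dependent, so there are scalar coefficients relating them, and these coefficients are the natural candidates for $\mu_i$. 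I would define $\mu_i$ from the failure of the height coordinates to be coplanar in the projection — concretely, comparing the height-slopes of the two strips meeting along $f_i$ — and define $\lambda_i$ so that the tangential force $\lambda_i\dot f_i$ absorbs the discrete terms; the referenced Equations~\eqref{eq:stresslambda} and~\eqref{eq:stressmu} presumably give these closed forms. The verification that these candidates satisfy~\eqref{eq:sdequilibrium} is then a direct computation using the coplanarity relations coming from developability of each strip.

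\textbf{The main obstacle.} I expect the crux to be the bookkeeping in direction (\ref{itm:lift1}): correctly differentiating the integral in~\eqref{eq:Hk} (which has $t$ in both the upper limit and inside the integrand via $f_k(t)$), correctly forming the discrete difference across the strip so that the $\mu$-terms from neighboring strips line up with the signs in~\eqref{eq:sdequilibrium}, and then recognizing the resulting combination of $2\times 2$ determinants as an instance of the Gra\ss mann--Pl\"ucker identity that reduces to the self-stress equation. In direction (\ref{itm:lift2}), the analogous difficulty is guessing the right normalization of $\mu_i$ and $\lambda_i$ from the geometry so that the equilibrium equation holds identically; getting the scaling and signs right, rather than merely up to a factor, is where the care is needed. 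A secondary subtlety is the role of the boundary curves $f_{-1}$ and $f_{n+1}$ and the associated stresses $\mu_{-1}$, $\lambda_0$ appearing as the ``initial conditions'' of the lifting, which must be handled consistently at the ends of the index range.
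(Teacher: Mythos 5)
Your overall strategy coincides with the paper's: for part (i) compute $\DD L_{k-1}$, $\dot L_{k-1}$, $\dot L_k$ from the explicit formula \eqref{eq:Hk} along ``L''-shaped paths and show their linear dependence, and for part (ii) read the stresses off the geometry of $F$ and verify \eqref{eq:sdequilibrium} using developability of the strips. In part (i) your plan of expanding the $3\times 3$ determinant along the height coordinates and invoking Gra\ss mann--Pl\"ucker is essentially the same identity the paper exploits in a slicker package: after using the self-stress equation to replace $\int_0^t\mu_{k-1}\DD f_{k-1}$ by $\int_0^t\mu_{k-2}\DD f_{k-2}+\partial_u(\lambda_{k-1}\dot f_{k-1})$ inside the height of $\dot L_k$, and adding a harmless multiple of $\dot f_{k-1}$ inside the determinant for $\dot L_{k-1}$, all three vectors take the form $(x,\det(q,x))$ with one \emph{common} $q$, so they are images of three planar vectors under a single linear map and hence dependent. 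Your expectation that the determinant ``collapses to a multiple of the self-stress equation'' is right, but the specific maneuver --- massaging the three height coordinates so that they share the same $q$ --- is the step your sketch leaves implicit, and it is exactly where the self-stress hypothesis enters.

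The genuine gap is in part (ii). You do not produce the stress functions, and your heuristic for $\mu_i$ (comparing the height-slopes of the two strips meeting along $f_i$) points in a slightly wrong direction: in the paper $\mu_i$ is a second-order quantity of the single $i$-th strip, namely $\mu_i=\big(\det(\DD f_i,\ddot f_i)\dot z_i-\det(\DD f_i,\dot f_i)\ddot z_i-\det(\dot f_i,\ddot f_i)\DD z_i\big)/\det(\DD f_i,\dot f_i)^2$, while it is $\lambda_i$ that couples the two strips adjacent to the curve $f_i$. More importantly, ``verify by a direct computation using the coplanarity relations'' hides the entire content of this direction: the paper's verification pairs the residual $e_i$ of \eqref{eq:sdequilibrium} against the two linearly independent vectors $\dot f_i$ and $\DD f_{i-1}$ (this is precisely where the regularity hypothesis is used), substitutes the developability conditions \eqref{eq:devcond1}--\eqref{eq:devcond2} and the $t$-derivative of the latter to eliminate $\dot z_{i\pm1}$ and $\ddot z_{i-1}$, and then recognizes each surviving coefficient as a Gra\ss mann--Pl\"ucker expression that vanishes by Lemma~\ref{lem:bracket}. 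Without the explicit formulas \eqref{eq:stresslambda}--\eqref{eq:stressmu} and at least the skeleton of this elimination, part (ii) is asserted rather than proved.
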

\begin{proof}
  For~\ref{itm:lift1} we have to show linear dependence of
  \begin{equation}
    \label{eq:threevectors}
    \DD L_{k - 1}(t), \dot L_{k - 1}(t), \dot L_k(t), 
  \end{equation}
  for all $k$ and $t$ (cf.\ Remark~\ref{rem:conjugate}).
  Let $\gamma$ and $\delta$ be two increasing semi-discrete paths, ending in
  $(k, t)$ and $(k - 1, t)$, respectively, with
  \begin{equation*}
    \gamma_0 = \ldots = \gamma_{k - 1} = 0,
    \
    \gamma_k = \gamma_{k + 1} = t
    \qquad\text{and}\qquad
    \delta_0 = \ldots = \delta_{k - 1} = 0,
    \
    \delta_k = t.
  \end{equation*}
  We therefore have 
  $L_{k - 1}(t) = (f_{k - 1}, H_\delta(f(k - 1, t)))$
  and
  $L_k(t) = (f_k, H_\gamma(f(k, t)))$.
  For the sake of brevity and clarity we will omit the parameter~$0$, for example,
  $\lambda_i$ instead of $\lambda_i(0)$, for the remainder of this proof.
  We have
  \begin{align*}
    H_\gamma(f(k, t))
    =&
    \sum_{i = 0}^{k - 2} 
    \Big(\lambda_i \det(\dot f_i, f_k(t) - f_i) \Big)
    +
    \lambda_{k - 1}(t) \det(\dot f_{k - 1}(t), f_k(t) - f_{k - 1}(t))
    \\
    &
    -
    \int_0^t 
    \mu_{k - 2}(u) \det(\DD f_{k - 2}(u), f_k(t) - f_{k - 2}(u))\, \dd u
  \end{align*}
  and 
  \begin{equation}
    \begin{aligned}
      \label{eq:hdelta}
      H_\delta(f(k - 1, t))
      =&
      \sum_{i = 0}^{k - 2} 
      \Big(\lambda_i \det(\dot f_i, f_{k - 1}(t) - f_i) \Big)
      \\
      &-
      \int_0^t 
      \mu_{k - 2}(u) 
      \det(\DD f_{k - 2}(u), f_{k - 1}(t) - f_{k - 2}(u))\, \dd u.
    \end{aligned}
  \end{equation}
  Consequently,
  \begin{equation}
    \label{eq:ddf}
    \begin{aligned}
    &\DD L_{k - 1}(t) 
    =
    (\DD f_{k - 1}(t), 
    H_\gamma(f(k, t)) - H_\delta(f(k - 1, t))\\
    &=
    \Big(
    \DD f_{k - 1}(t), 
    \det\big(
    \sum_{i = 0}^{k - 2} 
    \lambda_i \dot f_i + \lambda_{k - 1}(t) \dot f_{k - 1}(t)
    - 
    \int_0^t \mu_{k - 2}(u) \DD f_{k - 2}(u)\, \dd u,
    \DD f_{k - 1}(t)\big) 
    \Big).
  \end{aligned}
  \end{equation}
  Using Leibniz' integral rule we differentiate $H_\delta$
  from Equation~\eqref{eq:hdelta} with respect to~$t$ and obtain
  \begin{equation*}
    \dot L_{k - 1}(t)
    =
    \Big(
    \dot f_{k - 1}(t),
    \det\big(
    \sum_{i = 0}^{k - 2} 
    \lambda_i \dot f_i 
    - 
    \int_0^t \mu_{k - 2}(u) \DD f_{k - 2}(u)\, \dd u,
    \dot f_{k - 1}(t)\big) 
    \Big).
  \end{equation*}
  Using the determinant rule $\det(a,b)=\det(a+\lambda b,b)$, we can rewrite this as
  \begin{equation}
    \label{eq:fk-1}
    \dot L_{k - 1}(t)
    =
    \Big(
    \dot f_{k - 1}(t),
    \det\big(
    \sum_{i = 0}^{k - 2} 
    \lambda_i\dot f_i 
    +
    \lambda_{k-1}(t)\dot f_{k-1}(t)
    - 
    \int_0^t \mu_{k-2}(u) \DD f_{k-2}(u)\, \dd u,
    \dot f_{k-1}(t)\big) 
    \Big).
  \end{equation}
  Analogously, we compute
  \begin{equation*}
    \dot L_k(t)
    =
    \Big(
    \dot f_k(t),
    \det\big(
    \sum_{i = 0}^{k - 1} 
    \lambda_i \dot f_i 
    - 
    \int_0^t \mu_{k - 1}(u) \DD f_{k - 1}(u)\, \dd u,
    \dot f_k(t)\big) 
    \Big).
  \end{equation*}
  Using Equation~\eqref{eq:sdequilibrium} for self-stresses we obtain
  \begin{equation*}
    \dot L_k(t)
    =
    \Big(
    \dot f_k(t),
    \det\big(
    \sum_{i = 0}^{k - 1} 
    \lambda_i \dot f_i 
    - 
    \int_0^t \mu_{k - 2}(u) \DD f_{k - 2}(u)
    -
    \frac{\partial}{\partial u} 
    \big(\lambda_{k - 1}(u) \dot f_{k - 1}(u)\big)\, \dd u,
    \dot f_k(t)\big) 
    \Big),
  \end{equation*}
  hence
  \begin{equation}
    \label{eq:fk}
    \dot L_k(t)
    =
    \Big(
    \dot f_k(t),
    \det\big(
    \sum_{i = 0}^{k - 2} 
    \lambda_i \dot f_i 
    +
    \lambda_{k - 1}(t) \dot f_{k - 1}(t)
    - 
    \int_0^t \mu_{k - 2}(u) \DD f_{k - 2}(u)\, \dd u,
    \dot f_k(t)\big) 
    \Big).
  \end{equation}
  Note that the right hand sides of Equations~\eqref{eq:ddf},~\eqref{eq:fk-1} and~\eqref{eq:fk} are vectors
  of the form
  \begin{equation*}
    A = (a, \det(q, a)),\quad
    B = (b, \det(q, b)),\quad
    C = (c, \det(q, c))
  \end{equation*}
  for some two-dimensional vectors $a, b, c, q$.
  We can assume without loss of generality that $c$ is a linear combination of $a, b$ which implies that $C$
  is linear combination $A, B$. Therefore $A, B, C$ are linearly dependent
  which implies linear dependence in~\eqref{eq:threevectors}, which is
  what we wanted to show for~\ref{itm:lift1}.

  For~\ref{itm:lift2}, 
  the assumption of $F$ being conjugate implies that 
  (cf.\ Definition~\ref{def:conjugate} and Remark~\ref{rem:conjugate})
  \begin{equation}
  \label{eq:conjugate}
    \det(\DD F_i, \dot F_i, \dot F_{i + 1}) = 0
    \qquad\text{and}\qquad
    \det(\DD F_{i - 1}, \dot F_{i - 1}, \dot F_i) = 0
  \end{equation}
  for the two strips adjacent to the curve $F_i$. 
  Let us denote $F_i(t)=(f_i(t),z_i(t))$ with 
  $f_i:U\to\R^2$ and $z_i :U\to\R$.
  Using this notation, Equations~\eqref{eq:conjugate} become
  \begin{equation}
    \label{eq:devcond1}
    \det(\dot f_i, \dot f_{i + 1}) \DD z_i 
    -
    \det(\DD f_i, \dot f_{i + 1}) \dot z_i 
    +
    \det(\DD f_i, \dot f_i) \dot z_{i + 1}
    = 0
  \end{equation}
  and
  \begin{equation}
    \label{eq:devcond2}
    \det(\dot f_{i - 1}, \dot f_i) \DD z_{i - 1} 
    -
    \det(\DD f_{i - 1}, \dot f_i) \dot z_{i - 1} 
    +
    \det(\DD f_{i - 1}, \dot f_{i - 1}) \dot z_i
    = 0.
  \end{equation}

  Let us define a pair of stress functions $(\lambda, \mu)$ by
  \begin{equation}
    \label{eq:stresslambda}
    \lambda_i 
    = 
    \frac{\det(\DD f_{i - 1}, \DD f_i) \dot z_i}{\det(\DD f_i, \dot f_i)
    \det(\DD f_{i - 1}, \dot f_i)}
    +
    \frac{\DD z_{i - 1}}{\det(\DD f_{i - 1}, \dot f_i)}
    -
    \frac{\DD z_i}{\det(\DD f_i, \dot f_i)}
  \end{equation}
  and
  \begin{equation}
    \label{eq:stressmu}
    \mu_i 
    =
    \frac{\det(\DD f_i, \ddot f_i) \dot z_i 
    - \det(\DD f_i, \dot f_i) \ddot z_i 
    - \det(\dot f_i, \ddot f_i) \DD z_i 
    }{\det(\DD f_i, \dot f_i)^2}.
  \end{equation}
  (Functions~$\lambda_i$ and~$\mu_i$ depend on $t$. We omit the parameter~$t$ for the sake of brevity.)
  We will show below that these stress functions constitute a
  self-stress for the orthogonal projection $f$ of $F$.
  But first we introduce the following notation (which we will also us later in
  Example~\ref{ex:1-strip}) to shorten our computations:
  \begin{equation}
    \label{eq:notation}
    \begin{array}{l@{\qquad}l@{\qquad}l}
      A = \det(\DD f_{i - 1}, \DD f_i)
      &
      D = \det(\DD f_i, \dot f_{i - 1})
      &
      I = \det(\dot f_i, \dot f_{i + 1})
      \\
      B = \det(\DD f_i, \dot f_i)
      &
      E = \det(\DD f_{i - 1}, \dot f_{i + 1})
      &
      \bar I = \det(\dot f_{i - 1}, \dot f_i)
      \\
      \bar B = \det(\DD f_{i - 1}, \dot f_{i - 1})
      &
      G = \det(\DD f_i, \ddot f_i)
      &
      J = \det(\dot f_i, \ddot f_i)
      \\
      C = \det(\DD f_i, \dot f_{i + 1})
      &
      \bar G = \det(\DD f_{i - 1}, \ddot f_{i - 1})
      &
      \bar J = \det(\dot f_{i - 1}, \ddot f_{i - 1})
      \\
      \bar C = \det(\DD f_{i - 1}, \dot f_i)
      &
      H = \det(\DD f_{i - 1}, \ddot f_i)
      &
      K = \det(\dot f_{i - 1}, \ddot f_i)
      \\
      &
      &
      L = \det(\ddot f_{i - 1}, \dot f_i).
    \end{array}
  \end{equation}
  Furthermore, we will need 
  \begin{equation*}
    \begin{array}{l}
      \dot A = 
      \det(\DD \dot f_{i - 1}, \DD f_i) + \det(\DD f_{i - 1}, \DD \dot f_i)
      = -B + D + E - \bar C
      \\
      \dot B = 
      \det(\DD \dot f_i, \dot f_i) + \det(\DD f_i, \ddot f_i)
      = -I + G
      \\
      \dot {\bar B} = 
      \det(\DD \dot f_{i - 1}, \dot f_{i - 1}) 
      + \det(\DD f_{i - 1}, \ddot f_{i - 1})
      = - \bar I + \bar G
      \\
      \dot {\bar C} = 
      \det(\DD \dot f_{i - 1}, \dot f_i) + \det(\DD f_{i - 1}, \ddot f_i)
      = -\bar I + H
      \\
      \dot {\bar I} = 
      \det(\ddot f_{i - 1}, \dot f_i) + \det(\dot f_{i - 1}, \ddot f_i)
      = L + K.
    \end{array}
  \end{equation*}

  Then the developability conditions of the strips~\eqref{eq:devcond1}
  and~\eqref{eq:devcond2} become
  \begin{equation}
    \label{eq:devcond}
    I \DD z_i - C \dot z_i + B \dot z_{i + 1} = 0
    \qquad\text{and}\qquad
    \bar I \DD z_{i - 1} - \bar C \dot z_{i - 1} + \bar B \dot z_i = 0.
  \end{equation}
  The derivative of the second equation in~\eqref{eq:devcond} becomes
  \begin{equation*}
    \dot {\bar I} \DD z_{i - 1}
    +
    \bar I \DD \dot z_{i - 1}
    -
    \dot {\bar C} \dot z_{i - 1}
    -
    \bar C \ddot z_{i - 1}
    +
    \dot {\bar B} \dot z_i
    +
    \bar B \ddot z_i
    = 0.
  \end{equation*}
  Substituting the above expressions for the derivatives and collecting
  coefficients yields
  \begin{equation}
    \label{eq:secondderiv}
    (K + L) \DD z_{i - 1}
    -
    H \dot z_{i - 1}
    +
    \bar G \dot z_i
    -
    \bar C \ddot z_{i - 1}
    +
    \bar B \ddot z_i
    = 0.
  \end{equation}

  Stress functions $\lambda_i$, $\mu_i$ and $\mu_{i - 1}$ become
  \begin{equation*}
    \lambda_i 
    = 
    \frac{A \dot z_i}{B \bar C} 
    + 
    \frac{\DD z_{i - 1}}{\bar C}
    -
    \frac{\DD z_i}{B},
    \quad
    \mu_i 
    = 
    \frac{G \dot z_i - B \ddot z_i - J \DD z_i}{B^2}
    \quad\text{and}\quad
    \mu_{i - 1} 
    = 
    \frac{\bar G \dot z_{i - 1} - \bar B \ddot z_{i - 1} - \bar J \DD z_{i
    - 1}}{\bar B^2},
  \end{equation*}
  and the derivative of $\lambda_i$ becomes
  \begin{equation*}
    \dot \lambda_i 
    =
    \frac{(\dot A \dot z_i + A \ddot z_i) B \bar C 
      - A \dot z_i (\dot B \bar C + B \dot {\bar C})}{B^2 \bar C^2}
    +
    \frac{\bar C \DD \dot z_{i - 1} - \dot {\bar C} \DD z_{i - 1}}{\bar
    C^2}
    -
    \frac{B \DD \dot z_i - \dot B \DD z_i}{\bar B^2}.
  \end{equation*}
  Substituting the above expressions for the derivatives and collecting
  coefficients yields
  \begin{equation*}
    \dot \lambda_i 
    = \Big(
    \frac{D + E}{B \bar C}
    +
    \frac{A (I - G)}{B^2 \bar C}
    +
    \frac{A (\bar I - H)}{B \bar C^2}
    \Big) \dot z_i
    -
    \frac{\dot z_{i - 1}}{\bar C}
    -
    \frac{\dot z_{i + 1}}{\bar B}
    +
    \frac{\bar I - H}{\bar C^2} \DD z_{i - 1}
    +
    \frac{G - I}{B^2} \DD z_i
    +
    \frac{A}{B \bar C} \ddot z_i.
  \end{equation*}
  Substituting $\dot z_{i+1}$ and $\dot z_{i-1}$ from Equations~\eqref{eq:devcond} yields
  \begin{equation*}
    \dot \lambda_i 
    = \Big(
    \frac{D + E}{B \bar C}
    +
    \frac{A (I - G)}{B^2 \bar C}
    +
    \frac{A (\bar I - H)}{B \bar C^2}
    -
    \frac{C}{B^2}
    -
    \frac{\bar B}{\bar C^2}
    \Big) \dot z_i
    -
    \frac{H}{\bar C^2} \DD z_{i - 1}
    +
    \frac{G}{B^2} \DD z_i
    +
    \frac{A}{B \bar C} \ddot z_i.
  \end{equation*}

  To show that the pair of stress functions $(\lambda, \mu)$ defined above
  is a self-stress we must verify Equation~\eqref{eq:sdequilibrium}, i.e.,
  we must show $e_i = 0$ where
  \begin{equation}
    \label{eq:ei}
    e_i = \dot \lambda_i \dot f_i + \lambda_i \ddot f_i + \mu_i \DD f_i 
    - \mu_{i - 1} \DD f_{i - 1}.
  \end{equation}

  Since we assume that the projected framework $f$ is regular we have that
  at all points the smooth and discrete derivative 
  $\dot f_i, \DD f_{i - 1}$ are linearly independent (cf.\
  Equation~\eqref{eq:regular}).
  Consequently, any vector $a \in \R^2$ with $\det(\dot f_i, a) = \det(\DD
  f_{i - 1}, a) = 0$ is the zero vector.
  Hence it will be sufficient to show
  $\det(\dot f_i,e_i)= \det(\DD f_{i-1},e_i)=0$.
  Let us first compute $\det(\DD f_{i - 1}, e_i)$:
  \medskip

  \mmeq{}{\det(\DD f_{i - 1}, e_i)
  \overset{\eqref{eq:ei}}{=}
    \dot \lambda_i \det(\DD f_{i - 1}, \dot f_i) 
    + \lambda_i \det(\DD f_{i - 1}, \ddot f_i)
    + \mu_i \det(\DD f_{i - 1}, \DD f_i)
  }\medskip

  \mmeq{=}{
  \dot \lambda_i \bar C + \lambda_i H + \mu_i A
  }\medskip

  \mmeq{=}{
    \Big(
    \frac{D + E}{B}
    +
    \frac{A (I - G)}{B^2}
    +
    \frac{A (\bar I - H)}{B \bar C}
    -
    \frac{C \bar C}{B^2}
    -
    \frac{\bar B}{\bar C}
    \Big) \dot z_i
    -
    \frac{H}{\bar C} \DD z_{i - 1}
    +
    \frac{G \bar C}{B^2} \DD z_i
    +
    \frac{A}{B} \ddot z_i
  }\medskip

  \mmeq{}{
    +
    \Big(
    \frac{A H}{B \bar C} \dot z_i
    + 
    \frac{H}{\bar C} \DD z_{i - 1}
    -
    \frac{H}{B} \DD z_i
    \Big)
    +
    \Big(
    \frac{A G}{B^2} \dot z_i - \frac{A}{B} \ddot z_i 
    - \frac{A J}{B^2} \DD z_i
    \Big)
  }\medskip

  \mmeq{=}{
    \Big(
    \frac{D + E}{B}
    +
    \frac{A I - C \bar C}{B^2}
    +
    \frac{A \bar I}{B \bar C}
    -
    \frac{\bar B}{\bar C}
    \Big) \dot z_i
    +
    \Big(
    \frac{\bar C G - A J}{B^2}
    -
    \frac{H}{B}
    \Big) \DD z_i.
  }\medskip

  \noindent
  We will use the bracket notation for the determinant $[\cdot,\cdot]=
  \det(\cdot,\cdot)$.
  The numerator of the second parenthesis over the common denominator is
  \begin{equation}
    \label{eq:numerator}
    A J - \bar C G + B H
    =
    [\DD f_{i - 1}, \DD f_i]
    [\dot f_i, \ddot f_i]
    -
    [\DD f_{i - 1}, \dot f_i]
    [\DD f_i, \ddot f_i]
    +
    [\DD f_i, \dot f_i]
    [\DD f_{i - 1}, \ddot f_i]
    = 0,
  \end{equation}
  which follows from Lemma~\ref{lem:bracket}.
  The numerator of the first parenthesis over the common denominator is
  \medskip

  \mmeq{}{B (A \bar I - B \bar B + \bar C D)
  +
  \bar C (A I - C \bar C + B E)
  }\medskip

  \mmeq{=}{
    B (
      [\DD f_{i - 1}, \DD f_i]
      [\dot f_{i - 1}, \dot f_i]
      -
      [\DD f_i, \dot f_i]
      [\DD f_{i - 1}, \dot f_{i - 1}]
      +
      [\DD f_{i - 1}, \dot f_i]
      [\DD f_i, \dot f_{i - 1}]
      )
  }\medskip

  \mmeq{}{
      +\
   \bar C (
      [\DD f_{i - 1}, \DD f_i]
      [\dot f_i, \dot f_{i + 1}]
      -
      [\DD f_i, \dot f_{i + 1}]
      [\DD f_{i - 1}, \dot f_i]
      +
      [\DD f_i, \dot f_i]
      [\DD f_{i - 1}, \dot f_{i + 1}]
      ) 
      = 0,
  }\medskip

  \noindent
  which follows from Lemma~\ref{lem:bracket}.
  Consequently, the two parentheses
  vanish and we obtain 
  $\det(\DD f_{i - 1}, e_i) = 0$.
  Let us now compute $\det(\dot f_i, e_i)$:
  \medskip

  \mmeq{}{\det(\dot f_i, e_i)
  \overset{\eqref{eq:ei}}{=}
  \lambda_i \det(\dot f_i, \ddot f_i)
  +
  \mu_i \det(\dot f_i, \DD f_i)
  -
  \mu_{i - 1} \det(\dot f_i, \DD f_{i - 1})
  = 
  \lambda_i J - \mu_i B + \mu_{i - 1} \bar C
  }\medskip

  \mmeq{=}{
    \Big(
    \frac{A J}{B \bar C} \dot z_i
    + 
    \frac{J}{\bar C} \DD z_{i - 1}
    -
    \frac{J}{B} \DD z_i
    \Big) 
    +
    \Big(
    -\frac{G}{B} \dot z_i
    +
    \ddot z_i
    +
    \frac{J}{B} \DD z_i
    \Big) 
    +
    \Big(
    \frac{\bar C \bar G}{\bar B^2} \dot z_{i - 1}
    -
    \frac{\bar C}{\bar B} \ddot z_{i - 1}
    -
    \frac{\bar C \bar J}{\bar B^2} \DD z_{i - 1}
    \Big) 
  }\medskip

  \mmeq{=}{
    \frac{\bar C \bar G}{\bar B^2}
    \dot z_{i - 1}
    +
    \Big(
    \frac{A J}{B \bar C} - \frac{G}{B} 
    \Big) \dot z_i
    +
    \Big(
    \frac{J}{\bar C} - \frac{\bar C \bar J}{\bar B^2} 
    \Big) \DD z_{i - 1}
    + 
    \ddot z_i
    -
    \frac{\bar C}{\bar B} \ddot z_{i - 1}
  }\medskip

  \mmeq{\overset{\eqref{eq:secondderiv}}{=}}{
    \Big(
    \frac{\bar C \bar G}{\bar B^2} + \frac{H}{\bar B}
    \Big)
    \dot z_{i - 1}
    +
    \Big(
    \frac{A J}{B \bar C} - \frac{G}{B} - \frac{\bar G}{\bar B} 
    \Big) \dot z_i
    +
    \Big(
    \frac{J}{\bar C} - \frac{\bar C \bar J}{\bar B^2} - \frac{K + L}{\bar B} 
    \Big) \DD z_{i - 1}
  }\medskip

  \mmeq{\overset{\eqref{eq:devcond}}{=}}{
    \Big(
    \frac{A J}{B \bar C} - \frac{G}{B} + \frac{H}{\bar C} 
    \Big) \dot z_i
    +
    \Big(
    \frac{J}{\bar C} - \frac{\bar C \bar J - \bar G \bar I}{\bar B^2} 
    - \frac{K + L}{\bar B} + \frac{H \bar I}{\bar B \bar C}
    \Big) \DD z_{i - 1}.
  }\medskip

  \noindent
  The numerator of the first parenthesis over the common denominator is 
  $AJ-\bar CG+BH$ and therefore equals the one in
  Equation~\eqref{eq:numerator}, hence vanishes.
  The numerator of the second parenthesis over the common denominator is 
  \medskip

  \mmeq{}{
    \bar B (\bar B J - \bar C K + H \bar I)
    -
    \bar C (\bar B L - \bar G \bar I + \bar C \bar J)
  }\medskip

  \mmeq{=}{
    \bar B (
      [\DD f_{i - 1}, \dot f_{i - 1}]
      [\dot f_i, \ddot f_i]
      -
      [\DD f_{i - 1}, \dot f_i]
      [\dot f_{i - 1}, \ddot f_i]
      +
      [\DD f_{i - 1}, \ddot f_i]
      [\dot f_{i - 1}, \dot f_i]
    )
  }\medskip

  \mmeq{}{
    -\ 
    \bar C (
      [\DD f_{i - 1}, \dot f_{i - 1}]
      [\ddot f_{i - 1}, \dot f_i]
      -
      [\DD f_{i - 1}, \ddot f_{i - 1}]
      [\dot f_{i - 1}, \dot f_i]
      +
      [\DD f_{i - 1}, \dot f_i]
      [\dot f_{i - 1}, \ddot f_{i - 1}]
    ) 
    = 0,
  }\medskip

  \noindent
  which follows from Lemma~\ref{lem:bracket}. Hence, the two parentheses vanish and
  we obtain $\det(\bar f_i, e_i) = 0$. Consequently, $e_i = 0$ which is
  equivalent to Equation~\eqref{eq:sdequilibrium}.
\end{proof}

Theorem~\ref{thm:liftability} implies that a framework can be lifted to a semi-discrete
conjugate surface if and only if it is stressable. We therefore make the
following definition.

\begin{definition}
  We call a semi-discrete framework $f$ \emph{liftable} if there exists a
  semi-discrete conjugate surface $F = (f, z)$ in $\R^3$ which
  orthogonally projects to $f$.
\end{definition}

Theorem~\ref{thm:liftability} implies that liftability and stressability
are equivalent.

\begin{SCfigure}[2][h]
  \begin{overpic}[width=.6\textwidth]{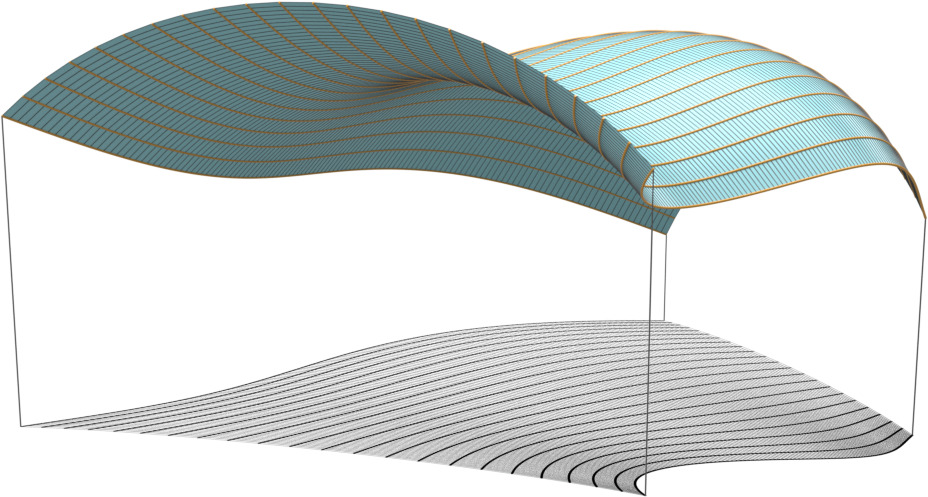}
    \put(34,1){\small$f$}
    \put(85,30){\small$F$}
  \end{overpic}
  \caption{Lifting of a stressable framwork. The semi-discrete framework
  $f$ in the plane is the orthogonal projection of a semi-discrete
  conjugate surface $F$. All strips on $F$ are developable surfaces. The
  smooth curves $F_i$ on $F$ project to the smooth curves $f_i$ in the
  framework $f$. The rulings of each strip of $F$ map to the rulings of
  the framework $f$, i.e., the orthogonal projection of $\DD F_i(t)$ is
  $\DD f_i(t)$ for all $t$.}
  \label{fig:lifting}
\end{SCfigure}

\section{Stressability of Special Semi-Discrete Frameworks}
\label{sec:stressability}

Suppose we are given a framework in the plane $f : U \to \R^2$.
Its stressability is equivalent to the existence of stress functions
$(\lambda, \mu)$ which fulfill Equation~\eqref{eq:sdequilibrium}:
\begin{equation*}
  \dot \lambda_i \dot f_i + \lambda_i \ddot f_i + \mu_i \DD f_i 
  - \mu_{i - 1} \DD f_{i - 1} = 0.
\end{equation*}

For any given framework $f$ the above equation is a linear system of ODEs
in $\lambda$ and $\mu$, so in non-degenerate cases (i.e., if the functions are sufficiently smooth and do not vanish) the system is locally solvable, therefore the framework is liftable. 

As parameter space we are always using $U = \{0, \ldots, n\} \times [0,
T]$. In that way we are putting an emphasis on considering $n - 1$ strips
between $0$ and $n$. The boundary strips between $f_{-1}, f_0$ and between
$f_n, f_{n + 1}$ are introduced for an easier handling of potential
boundary forces (see Definition~\ref{def:sdframework} and
Figure~\ref{fig:path} left for an illustration of the semi-discrete
parameter domain with the two boundary strips).

\begin{example}[1-curve framework]
  \label{ex:1-curve}
  The most minimalist non-trivial example is a framework that consists of
  only one curve with boundary force vectors attached along it (see
  Figure~\ref{fig:path} left).
  In this case $U=\{0\}\times[0,T]$.
  The self-stressability condition of this framework reads
  \begin{equation*}
  \dot \lambda_0 \dot f_0 + \lambda_0 \ddot f_0 + \mu_0 \DD f_0 
  - \mu_{-1} \DD f_{-1} = 0.
  \end{equation*}
  We can prescribe the curves $f_{-1}, f_0, f_1$ and one of the three
  stress functions $\lambda_0, \mu_{-1}, \mu_0$.
  We need $f_{-1}$ and $f_1$ only to prescribe the directions of the
  boundary forces. Then the remaining two stress functions can be
  determined in such a way that the condition above is satisfied.
\end{example}

In the case where we have no boundary forces, i.e., $\mu_{-1} = \mu_n = 0$,
Equation~\eqref{eq:Hk} implies $H_\gamma(f(0, t)) = 0$.
Hence, the boundary curve $L(0, \cdot)$ of the lifting $L$ lies in a
horizontal plane. 
Recall that Proposition~\ref{prop:affine} implies that the lifting of
the first curve is an affine image of the lifting of the last curve if we
lift the framework in the reversed order.
Consequently, the other
boundary curve, $L(n, \cdot)$, also lies in a plane if the boundary forces are
vanishing. We obtain the following proposition.

\begin{proposition}
  Stressed frameworks $f$ with vanishing boundary forces $($$\mu_{-1} =
  \mu_n = 0$$)$ have liftings with planar boundary curves.
\end{proposition}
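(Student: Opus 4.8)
The plan is to show that each of the two boundary curves $L(0,\cdot)$ and $L(n,\cdot)$ lies in a plane of $\R^3$, treating them separately. The curve $L(0,\cdot)$ I would handle by a direct evaluation of the explicit height formula~\eqref{eq:Hk}, while the curve $L(n,\cdot)$ I would reduce to the first case by exploiting the order-reversal symmetry of Proposition~\ref{prop:affine}.

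For the first boundary curve I would specialize~\eqref{eq:Hk} to $k=0$. The discrete sum $\sum_{i=0}^{k-1}$ then becomes empty, so the only surviving contribution is the integral $-\int_0^t \mu_{-1}(u)\det(\DD f_{-1}(u), f_0(t)-f_{-1}(u))\,\dd u$. Since the boundary force $\mu_{-1}$ vanishes by hypothesis, this integral is identically zero, whence $H_\gamma(f(0,t))=0$ for every $t$. Consequently $L(0,t)=(f_0(t),0)$ lies in the horizontal plane $z=0$, which is certainly planar.

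For the second boundary curve I would pass to the reversed framework $g$ of Proposition~\ref{prop:affine}, with $g_k=f_{n-k}$ and reversed stresses $\hat\lambda_k=-\lambda_{n-k}$, $\hat\mu_k=-\mu_{n-k-1}$. The key observation is that $g$ again has vanishing boundary forces: its boundary stresses are $\hat\mu_{-1}=-\mu_n=0$ and $\hat\mu_n=-\mu_{-1}=0$. Therefore the computation of the first paragraph applies verbatim to $g$ and yields $H^g_\gamma(g(0,t))=0$. Evaluating Proposition~\ref{prop:affine} at $k=n$ gives $H^f_\gamma(f(n,t))-H^g_\gamma(g(0,t))=\sum_{i=0}^n \lambda_i(0)\det(\dot f_i(0), f_n(t)-f_i(0))$; since the second term on the left vanishes, $H^f_\gamma(f(n,t))$ is an affine function of $f_n(t)\in\R^2$. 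Hence $L(n,t)=(f_n(t),H^f_\gamma(f(n,t)))$ is the restriction of the graph of this affine map to the curve $f_n$, so it lies in a (generally tilted) plane of $\R^3$, which completes the argument.

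The main obstacle is not computational but bookkeeping: one must verify that reversing the framework preserves the vanishing of the boundary forces, so that the clean identity $H^g_\gamma(g(0,\cdot))=0$ may be reused. Once this symmetry is secured, both boundary curves are handled by the same mechanism, the first landing in the horizontal plane $z=0$ and the second in the plane given by the graph of the affine function of Proposition~\ref{prop:affine}.
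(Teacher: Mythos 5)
Your proposal is correct and follows essentially the same route as the paper: Equation~\eqref{eq:Hk} with $k=0$ gives $H_\gamma(f(0,t))=0$ since $\mu_{-1}=0$, and the order-reversal of Proposition~\ref{prop:affine} handles $L(n,\cdot)$. Your explicit check that the reversed framework again has vanishing boundary forces ($\hat\mu_{-1}=-\mu_n=0$, $\hat\mu_n=-\mu_{-1}=0$) is a detail the paper leaves implicit, and it is exactly the right point to verify.
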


Note that the reverse statement does not hold as can be deduced from
simple counterexamples. 

The above condition of vanishing boundary forces ($\mu_{-1} = \mu_n = 0$) implies a condition for the framework $f$ to be liftable.
Let us determine this condition for the simplest (non-trivial) case of a
framework consisting of just one strip.

\begin{figure}[htb]
  \begin{overpic}[width=.4\textwidth]{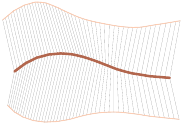}
    \put(67,56){\small$f_1$}
    \put(96,24){\small$f_0$}
    \put(67,2){\small$f_{-1}$}
  \end{overpic}
  \hfill
  \begin{overpic}[width=.53\textwidth]{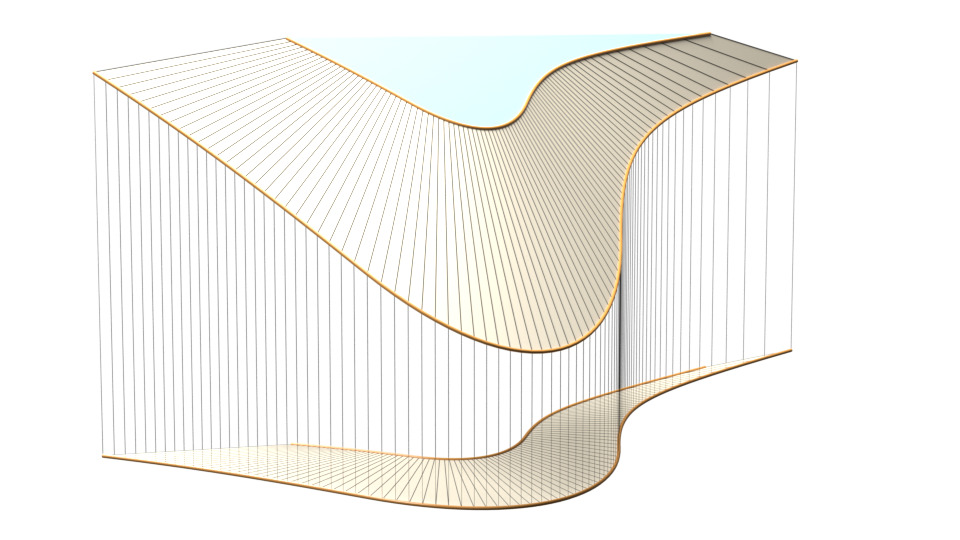}
  \end{overpic}
  \caption{\emph{Left:} A curve $f_0$ with boundary forces pointing from
  $f_0$ to the curves $f_{-1}$ and $f_1$. Any such configuration of a curve
  with two neighboring curves is stressable (cf.\
  Example~\ref{ex:1-curve}).
  \emph{Right:} A framework $f$ consisting of just one strip
  (i.e., two neighboring curves) and with no boundary forces ($\mu_{-1}=\mu_1=0$). Such a framework is liftable only if Equation~\eqref{eq:1-strip-liftability} holds (cf.\
  Example~\ref{ex:1-strip}).}
  \label{fig:1-strip}
\end{figure}

\begin{example}[1-strip framework]
  \label{ex:1-strip}
  Let $f:U\to\R^2$, $U=\{0,1\}\times[0, T]$,
  be a framework consisting of just one strip (i.e.,
  two neighboring curves) and with no boundary forces ($\mu_{-1} = \mu_1 = 0$), see Figure~\ref{fig:1-strip} right.
  Such a framework is liftable if there are
  functions $\lambda_0, \lambda_1, \mu_0$ satisfying the system
  \begin{align*}
  \left\{
  \begin{array}{l}
    \dot \lambda_0 \dot f_0 + \lambda_0 \ddot f_0 + \mu_0 \DD f_0 = 0
    \\
    \dot \lambda_1 \dot f_1 + \lambda_1 \ddot f_1 - \mu_0 \DD f_0 = 0.
   \end{array}
   \right.
  \end{align*}
  By scalar multiplying both equations with two linearly independent
  vectors $\dot f_0^\perp, \DD f_0^\perp$ we obtain an equivalent system of
  four equations (${}^\perp$ denotes the rotation through 90 degrees)
  \begin{align*}
  \left\{
  \begin{array}{l}
    \lambda_0 \det(\dot f_0, \ddot f_0) + \mu_0 \det(\dot f_0, \DD f_0) = 0
    \\
    \dot \lambda_1 \det(\dot f_0, \dot f_1) 
    + \lambda_1 \det(\dot f_0, \ddot f_1) - \mu_0 \det(\dot f_0, \DD f_0) = 0
    \\
    \dot \lambda_0 \det(\DD f_0, \dot f_0) + \lambda_0 \det(\DD f_0, \ddot
    f_0) = 0
    \\
    \dot \lambda_1 \det(\DD f_0, \dot f_1) + \lambda_1 \det(\DD f_0, \ddot
    f_1) = 0.
    \end{array}
   \right.
  \end{align*}
  Using the notation introduced in Equations~\eqref{eq:notation} with $i =
  1$ the system becomes
  \begin{align}
    \label{eq:1-strip1}
    &\lambda_0 \bar J - \mu_0 \bar B = 0
    \\
    \label{eq:1-strip2}
    &\dot \lambda_1 \bar I + \lambda_1 K + \mu_0 \bar B = 0
    \\
    \label{eq:1-strip3}
    &\dot \lambda_0 \bar B + \lambda_0 \bar G = 0
    \\
    \label{eq:1-strip4}
    &\dot \lambda_1 \bar C + \lambda_1 H = 0.
  \end{align}
  Equations~\eqref{eq:1-strip3} and~\eqref{eq:1-strip4} imply 
  \begin{equation*}
    \lambda_0 = \exp\left(- \int \frac{\bar G}{\bar B}\right)
    \qquad\text{and}\qquad
    \lambda_1 = \exp\left(- \int \frac{H}{\bar C}\right).
  \end{equation*}
  Substituting Equation~\eqref{eq:1-strip1} into
  Equation~\eqref{eq:1-strip2} yields
  \begin{equation*}
    \dot \lambda_1 \bar I + \lambda_1 K + \lambda_0 \bar J = 0.
  \end{equation*}
  By substituting the $\lambda$'s into this equation we obtain the
  condition on the liftability of the 1-strip framework $f$:
  \begin{equation*}
    - \bar I \frac{H}{\bar C} \exp\left(-\int \frac{H}{\bar C}\right)
    + K \exp\left(-\int \frac{H}{\bar C}\right)
    + \bar J \exp\left(-\int \frac{\bar G}{\bar B}\right)
    = 0,
  \end{equation*}
  or equivalently
  \begin{equation*}
    \bar C K - H \bar I + \bar C \bar J 
    \exp\left(\int \frac{H}{\bar C} - \frac{\bar G}{\bar B}\right)
    = 0.
  \end{equation*}
  By the Grassman-Pl{\"u}cker relation (Lemma~\ref{lem:bracket}) we obtain
  \begin{equation*}
    \bar C K - H \bar I 
    =[\DD f_0, \dot f_1] [\dot f_0, \ddot f_1] 
     -[\DD f_0, \ddot f_1] [\dot f_0, \dot f_1]
    =[\DD f_0, \dot f_0] [\dot f_1, \ddot f_1]
    =\bar B J.
  \end{equation*}
  Consequently, our condition reads
  \begin{equation*}
    -\frac{\bar B J}{\bar C \bar J}
    = \exp\left(\int \frac{H}{\bar C} - \frac{\bar G}{\bar B}\right)
    = \exp(M),
    \qquad\text{where}\qquad
    M = \int \frac{H}{\bar C} - \frac{\bar G}{\bar B}.
  \end{equation*}
  Differentiating yields 
  \begin{equation*}
    -\frac{(\dot {\bar B} J + \bar B \dot J) \bar C \bar J - 
    \bar B J (\dot {\bar C} \bar J + \bar C \dot {\bar J})}{\bar C^2 \bar J^2}
    =
    \exp(M) \dot M
    =
    -\frac{\bar B J}{\bar C \bar J}
    \Big(
    \frac{H}{\bar C}
    -
    \frac{\bar G}{\bar B}
    \Big),
  \end{equation*}
  which is equivalent to
  \begin{equation*}
    J \bar J (\bar B (\dot {\bar C} + H) - \bar C (\dot {\bar B} + G))
    + \bar B \bar C (\dot J \bar J - J \dot {\bar J})
    = 0.
  \end{equation*}
  Switching back to the determinant notation we get the condition
  \begin{equation}
    \label{eq:1-strip-liftability}
    \begin{aligned}
      &[\dot f_0, \ddot f_0]
      [\dot f_1, \ddot f_1]
      \big(
      [\DD f_0, \dot f_0] (2 [\DD f_0, \ddot f_1] - [\dot f_0, \dot f_1])
      -
      [\DD f_0, \dot f_1] (2 [\DD f_1, \ddot f_1] - [\dot f_0, \dot f_1])
      \big)
      \\
      =&
      [\DD f_0, \dot f_0]
      [\DD f_0, \dot f_1]
      \big(
      [\dot f_0, \ddot f_0]
      [\dot f_1, \dddot{f_1}]
      -
      [\dot f_1, \ddot f_1]
      [\dot f_0, \dddot{f_0}]
      \big).
    \end{aligned}
  \end{equation}
\end{example}

From Example~\ref{ex:1-strip} we obtain the following condition.

\begin{proposition}
  A one-strip framework $f:U\to\R^2$, $U=\{0,1\}\times[0,T]$, with no boundary forces $($$\mu_{-1}=\mu_1=0$$)$, is liftable if
  and only if Equation~\eqref{eq:1-strip-liftability} holds.
\end{proposition}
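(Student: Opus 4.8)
The plan is to reduce liftability to the existence of a self-stress and then to extract the explicit solvability criterion. By Theorem~\ref{thm:liftability} together with the definition of liftability, the framework $f$ is liftable if and only if it carries a self-stress $(\lambda,\mu)$, i.e.\ a solution of the equilibrium condition~\eqref{eq:sdequilibrium}. For a one-strip framework on $U=\{0,1\}\times[0,T]$ with vanishing boundary forces $\mu_{-1}=\mu_1=0$, the only unknowns are $\lambda_0,\lambda_1,\mu_0$, and~\eqref{eq:sdequilibrium} reduces to the two planar vector equations written at the start of Example~\ref{ex:1-strip}. Since this entire computation is carried out there, the proof of the proposition amounts to organising that computation into the two implications of an ``if and only if''.

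First I would split each of the two vector equations into a pair of scalar equations by taking determinants against the two linearly independent directions $\dot f_0$ and $\DD f_0$; by regularity these determinants detect the vanishing of a planar vector, so no information is lost. This yields the four scalar equations~\eqref{eq:1-strip1}--\eqref{eq:1-strip4}. Equations~\eqref{eq:1-strip3} and~\eqref{eq:1-strip4} are homogeneous linear first-order ODEs and determine $\lambda_0$ and $\lambda_1$ up to a multiplicative constant as exponentials of integrals, while~\eqref{eq:1-strip1} fixes $\mu_0$ algebraically from $\lambda_0$. The whole system therefore collapses onto the single scalar equation~\eqref{eq:1-strip2}; substituting the exponential forms of $\lambda_0,\lambda_1$ and using the Grassmann-Pl\"ucker identity (Lemma~\ref{lem:bracket}) to rewrite $\bar C K-H\bar I$ as $\bar B J$ turns it into the compact relation $-\bar B J/(\bar C\bar J)=\exp(M)$ with $M=\int(H/\bar C-\bar G/\bar B)$.

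The decisive manoeuvre is to remove the integral sign, and hence the residual integration constant, by differentiating this relation; equivalently, one equates the logarithmic derivative of $-\bar B J/(\bar C\bar J)$ with $\dot M=H/\bar C-\bar G/\bar B$. Clearing denominators and translating back into bracket notation produces precisely Equation~\eqref{eq:1-strip-liftability}, a relation intrinsic to $f$ that involves no stress functions. For the \emph{necessity} direction this is automatic: a self-stress forces the relation, so its derivative, namely~\eqref{eq:1-strip-liftability}, must hold. For \emph{sufficiency} I would run the argument in reverse: if~\eqref{eq:1-strip-liftability} holds, then $-\bar B J/(\bar C\bar J)$ and $\exp(M)$ have equal logarithmic derivatives, hence their ratio is a nonzero constant; absorbing that constant into the free multiplicative constant of $\lambda_0/\lambda_1$ restores the exact relation $-\bar B J/(\bar C\bar J)=\exp(M)$, and the choices of $\lambda_0,\lambda_1,\mu_0$ above then satisfy all of~\eqref{eq:1-strip1}--\eqref{eq:1-strip4}, giving a genuine self-stress.

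I expect the main obstacle to lie not in the algebra -- which is the bookkeeping already done in Example~\ref{ex:1-strip} -- but in making the sufficiency direction watertight. One must check that the constants of integration left free by~\eqref{eq:1-strip3}--\eqref{eq:1-strip4} really can be tuned to meet the undifferentiated relation, and that the non-degeneracy hypotheses (regularity of $f$ and non-vanishing of the brackets $\bar B,\bar C,\bar J,J$ entering the denominators) ensure $-\bar B J/(\bar C\bar J)$ keeps a constant sign, so that it can be matched to a strictly positive exponential multiplied by a suitable real constant.
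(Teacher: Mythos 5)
Your proposal follows essentially the same route as the paper, whose proof of this proposition is exactly the computation in Example~\ref{ex:1-strip}: split the two equilibrium equations against $\dot f_0$ and $\DD f_0$ into~\eqref{eq:1-strip1}--\eqref{eq:1-strip4}, solve~\eqref{eq:1-strip3}--\eqref{eq:1-strip4} by exponentials, reduce to $-\bar B J/(\bar C\bar J)=\exp(M)$ via Lemma~\ref{lem:bracket}, and differentiate to obtain~\eqref{eq:1-strip-liftability}. If anything, you are more careful than the paper on the sufficiency direction, where you correctly note that differentiation only determines the relation up to a multiplicative constant that must then be absorbed into the free constants of $\lambda_0,\lambda_1$ -- a point the paper leaves implicit.
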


\begin{remark}
  It remains an \emph{open problem} to find liftability conditions for a
  framework with $n$ strips with $\mu_{-1} = \mu_n = 0$ for $n \geq 2$.
\end{remark}

\section*{Acknowledgements}

C.M.\ gratefully acknowledges the support by the Austrian
Science Fund (FWF) through grant I~4868 (DOI 10.55776/I4868) and by the Heilbronn Institute for
Mathematical Research (HIMR) through an International Visitor Award, funded by the UKRI/EPSRC Additional Funding Programme for Mathematical Sciences.

\bibliographystyle{abbrv}
\bibliography{main.bib}

\bigskip 

\noindent
\footnotesize \textbf{Authors' addresses:}

\bigskip

\noindent{Department of Mathematical Sciences, University of Liverpool, UK} 
\hfill \texttt{karpenk@liverpool.ac.uk}
\medskip  

\noindent{Institute of Discrete Mathematics and Geometry, TU Wien, Austria} 
\hfill \texttt{cmueller@geometrie.tuwien.ac.at}
\medskip

\noindent{Department of Mathematical Sciences, University of Liverpool, UK} 
\hfill \texttt{annap@liverpool.ac.uk}

\end{document}